\RequirePackage{ifpdf}
\ifpdf
 	\documentclass[pdftex]{amsart}
 	\else
 	\documentclass[dvips]{amsart}
\fi

\usepackage{amsfonts,amsthm,latexsym,amsmath,amssymb,amscd,amsmath, mathrsfs, epsf}
\usepackage{graphicx}

\graphicspath{{./figures/}}
\ifpdf
	\usepackage{epstopdf}		
	\DeclareGraphicsExtensions{.png,.jpg,.eps,.epsf}
\fi

\newtheorem{theorem}{Theorem}
\newtheorem{proposition}[theorem]{Proposition}
\newtheorem{lemma}[theorem]{Lemma}
\newtheorem{definition}{Definition}
\newtheorem{corollary}{Corollary}
\newtheorem{example}{Example}
\newtheorem{remark}{Remark}
\newtheorem{conjecture}{Conjecture}

 \newcommand{\la}{\lambda}

\newcommand{\Z}{\mathbb{Z}}

\newcommand{\xvec}{\mathbf{x}}
\newcommand{\yvec}{\mathbf{y}}

\newcommand{\cD}{\mathcal{D}}	

\newcommand{\PST}{\mathbf{PST}}		


\tolerance=500

\begin{document}
\title[Discriminants,  symmetrized graph monomials, and SOS]{Discriminants,  symmetrized graph monomials, and sums of squares} 

\author[P.~Alexandersson]{Per Alexandersson}

\address{ Department of Mathematics,
   Stockholm University,
   S-10691, Stockholm, Sweden}
\email{per@math.su.se}

\author[B. Shapiro]{Boris Shapiro}

\address{Department of Mathematics,
   Stockholm University,
   S-10691, Stockholm, Sweden}
\email{shapiro@math.su.se}

\begin{abstract}
Motivated by the necessities of the invariant theory of binary forms  J.~J.~Sylvester  constructed  in 1878 for  each graph with possible multiple edges but without loops its symmetrized graph monomial which is a polynomial in the vertex labels of the original graph. In the 20-th century this construction was  studied  by several authors. We  pose the question for which graphs  this polynomial is a non-negative resp. a sum of squares. 
This problem is motivated by a recent conjecture of F. Sottile and E. Mukhin on discriminant of the  derivative  
of a univariate polynomial, and an interesting example of P. and A. Lax of a graph with $4$ edges whose symmetrized graph monomial  is non-negative but not a sum of squares. We present detailed information about symmetrized graph monomials for graphs with four and six edges, obtained by computer calculations.
\end{abstract}

\maketitle

\section{Introduction} 
In what follows by a {\em graph} we will always mean a (directed or undirected) graph with (possibly) multiple edges but no loops. 
The classical construction of J.~J.~Sylvester and J.~Petersen \cite{sylvester,petersen} associates
to an arbitrary directed loopless graph a symmetric polynomial as follows:

\begin{definition}
Let $g$ be a directed graph, with vertices $x_1,\dots,x_n$ and adjacency matrix $(a_{ij}).$
Define first its graph monomial $P_g$ as follows 
$$P_g(x_1,\dots,x_n):=\prod_{1\leq i , j \leq n} (x_i-x_j)^{a_{ij}},$$
where $a_{ij}$ is the number of edges joining $x_i$ with $x_j.$

The symmetrized graph monomial of $g$ is defined as
$$\tilde{g}(\xvec)=\sum_{\sigma \in S_n} P_g(\sigma \xvec),\quad \xvec = x_1,\dots,x_n.$$
\end{definition}

Notice that if the original $g$ is undirected one can still define $\tilde g$ up to a sign by choosing an arbitrary orientation of its edges. 
Symmetrized graph monomials are closely related to $SL_2$-invariants and covariants and were introduced in 1870's in attempt  to find new tools in the invariant theory. 
Namely, to obtain an $SL_2$-coinvariant from a given $\tilde {g}(\xvec)$ we have to perform two standard operations. First we express the symmetric polynomial $\tilde{g}(\xvec)$ in $n$ variables in terms of the elementary symmetric functions $e_1,...,e_n$ and obtain the (inhomogeneous) polynomial $P_g(e_1,...,e_n)$. Secondly, we perform the standard homogenization of a polynomial of a given degree $d$ 
$$Q_g(e_0,e_1,...,e_n):=e_0^dP_g\left(\frac{e_1}{e_0},...,\frac{e_n}{e_0}\right).$$

The following fundamental proposition apparently goes back to A.~Cayley, see Theorem~2.4 of \cite{sabidussi2}. 

\begin{theorem}\label{th:inv}

\rm{(i)} If $g$ is a $d$-regular graph with $n$ vertices  then $Q_g(e_0,...,e_n)$ is either an $SL_2$-invariant of degree $d$ in $n$ variables or it is identically zero.

\noindent
\rm{(ii)} Conversely, if $Q(e_0,...,e_n)$ is an $SL_2$-invariant of degree $d$ and order $n$ then there exist $d$-regular graphs $g_1,...,g_r$ with $n$ vertices and integers $\la_1,...,\la_r$ such that 
$$Q=\la_1 Q_{g_1}+...+\la_rQ_{g_r}.$$
\end{theorem}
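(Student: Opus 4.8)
The plan is to establish the two halves by rather different means: part~(i) by a direct computation of how the graph monomial and the leading coefficient of a binary form transform under $GL_2$, and part~(ii) by a surjectivity statement built on the First Fundamental Theorem of classical invariant theory for $SL_2$ together with Hermite reciprocity.

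For~(i) I would realize $x_1,\dots,x_n$ as the roots in $\mathbb{P}^1$ of a binary form $f(t,s)=a_0\prod_{i=1}^n(t-x_is)$, so that each $e_k$ equals $\pm a_k/a_0$ and $Q_g(a_0,\dots,a_n)$ is by construction $a_0^{\,d}$ times $\tilde g(x_1,\dots,x_n)$ re-expressed through the $e_k$. A short computation shows that under $g=\left(\begin{smallmatrix}\alpha&\beta\\\gamma&\delta\end{smallmatrix}\right)\in GL_2$ one has $x_i-x_j\mapsto \det(g)\,(x_i-x_j)\big/\!\big((\alpha-\gamma x_i)(\alpha-\gamma x_j)\big)$ while $a_0\mapsto a_0\prod_i(\alpha-\gamma x_i)$. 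Plugging this into $P_g=\prod_{i,j}(x_i-x_j)^{a_{ij}}$ and invoking $d$-regularity --- which makes each $x_i$ occur with total multiplicity exactly $d$, hence the number of edges equal to $nd/2$ --- the factors $(\alpha-\gamma x_i)$ produced by the edges are cancelled exactly by the $a_0^{\,d}$ in front, leaving $Q_g(f^g)=\det(g)^{nd/2}Q_g(f)$. As the simultaneous M\"obius action commutes with permuting the roots, this identity survives symmetrization, so $Q_g$ is a $GL_2$-relative invariant of weight $nd/2$, i.e.\ an $SL_2$-invariant of degree $d$, unless $\tilde g\equiv 0$. It remains to see $Q_g$ is a genuine \emph{polynomial} in $a_0,\dots,a_n$: as a rational function it could only have a pole along $\{a_0=0\}$ (on the discriminant locus $\tilde g$ stays finite, being a polynomial in the roots), but a $GL_2$-relative invariant cannot have a pole along the non-$GL_2$-stable hypersurface $\{a_0=0\}$, so there is none.

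For~(ii) recall that the degree-$d$ component of the invariant ring of binary $n$-forms is $\big(\mathrm{Sym}^d(\mathrm{Sym}^n W)\big)^{SL_2}$ with $W=\mathbb{C}^2$. Since translation lies in $SL_2$, an invariant, evaluated on roots, is a symmetric polynomial in the differences $x_i-x_j$, and its degree is forced to be $nd/2$; hence it already lies in the $\Q$-span of the symmetrized monomials $\tilde h$ as $h$ ranges over graphs on $n$ vertices with $nd/2$ edges. To cut this span down to $d$-regular graphs, pass to homogeneous coordinates $x_i=[u_i:v_i]$: the monomial of a $d$-regular $g$ becomes $\prod_{\mathrm{edges}}[i\,j]^{a_{ij}}\big/\prod_i v_i^{\,d}$ with $[i\,j]=u_iv_j-u_jv_i$, and $\prod_{\mathrm{edges}}[i\,j]^{a_{ij}}$ --- a product of brackets in which each of the $n$ slots is used exactly $d$ times --- is, by the First Fundamental Theorem, a spanning set for the $SL_2$-invariants of $(\mathrm{Sym}^d W)^{\otimes n}$. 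Taking $S_n$-invariants identifies these with the invariants of $\mathrm{Sym}^n(\mathrm{Sym}^d W)$, and Hermite reciprocity $\mathrm{Sym}^n(\mathrm{Sym}^d W)\cong\mathrm{Sym}^d(\mathrm{Sym}^n W)$ matches them with the degree-$d$ invariants of the $n$-form, under precisely the root/coefficient dictionary of part~(i). Tracing this back gives $Q=\sum_i\la_iQ_{g_i}$ over $d$-regular $g_i$, a priori with rational $\la_i$; clearing denominators yields the integral form.

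The main obstacle is the final step of~(ii). That invariants become symmetric polynomials in the root differences is immediate, but those are spanned by \emph{all} graphs with $nd/2$ edges, and the substantive point is that the non-regular graphs drop out once the full $SL_2$-symmetry, not merely its affine subgroup, is imposed. The representation-theoretic route above makes this transparent, but the details needing care are the compatibility of the $S_n$-symmetrization with the bracket/FFT description and the exact normalizations in Hermite reciprocity; a more hands-on alternative is to let the lower-triangular unipotent one-parameter subgroup of $SL_2$ act on the linear span of graph monomials and argue that its invariants are supported on the regular graphs, which is elementary in spirit but demands control of possible cancellations between graphs with different degree sequences. Forms with vanishing leading coefficient or with repeated roots are handled throughout by Zariski density of the generic stratum.
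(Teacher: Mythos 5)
The paper does not actually prove this theorem: it is quoted as a classical fact going back to Cayley, with a pointer to Theorem~2.4 of \cite{sabidussi2}, so there is no in-house argument to compare yours against; your proposal must be judged on its own. On those terms it is a sound reconstruction of the classical proof. Part~(i) is essentially complete: the identity $x_i-x_j\mapsto \det(A)\,(x_i-x_j)/\bigl((\alpha-\gamma x_i)(\alpha-\gamma x_j)\bigr)$ together with $a_0\mapsto a_0\prod_i(\alpha-\gamma x_i)$ and $d$-regularity gives relative invariance of weight $nd/2$, and polynomiality follows either from your ``no pole along the non-invariant hypersurface $a_0=0$'' argument or, more directly, from the observation that each $x_i$ occurs in $\tilde g$ with degree at most $d$, so $\tilde g$ has degree at most $d$ in the $e_k$ and the homogenization by $e_0^d$ introduces no denominators. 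For part~(ii) your route is correct in outline, but the invocation of Hermite reciprocity is both the step you rightly flag as delicate and, happily, avoidable: pull $Q$ back along the map $W^n\to\mathrm{Sym}^nW$ sending $n$ binary vectors to the product of the corresponding linear forms. The pullback is $S_n$-symmetric, $SL_2$-invariant and of multidegree $(d,\dots,d)$, hence by the First Fundamental Theorem a linear combination of bracket monomials using each slot exactly $d$ times, i.e.\ of symmetrized bracket monomials of $d$-regular loopless graphs; each of these is, by the same computation as in part~(i), exactly the pullback of the corresponding $Q_{g_i}$. Since over $\mathbb{C}$ every binary form is a product of linear forms, the pullback is injective, and $Q=\sum_i\la_iQ_{g_i}$ follows without ever having to normalize the abstract isomorphism $\mathrm{Sym}^n(\mathrm{Sym}^dW)\cong\mathrm{Sym}^d(\mathrm{Sym}^nW)$. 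Two small caveats: ``clearing denominators'' at the end rescales $Q$, so integrality of the $\la_i$ as literally stated requires either permitting that rescaling or exploiting the many integral relations among the $Q_g$ (this imprecision is already present in the theorem as quoted); and you reuse the letter $g$ for both the graph and the group element, which is harmless but worth fixing.
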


\begin{remark} Recall that a graph is called $d$-regular if every its vertex has valency $d$. Notice that if $g$ is an arbitrary graph then it is natural to interpret its polynomial $Q_g(e_0,...,e_n)$  as the  $SL_2$-coinvariant.) 
\end{remark} 

The natural question about the kernel of the map sending $g$ to $\tilde {g}(\xvec)$ (or to $Q_g$)   
was already discussed by J.~Petersen who claimed that he has found  a necessary and sufficient condition when $g$ belongs to the kernel, see \cite{sabidussi2}.  This claim turned out to be false. (An interesting correspondence between J.~J.~Sylvester, D.~Hilbert and F.~Klein related to this topic can be found in \cite{sabidussi}.)  
The kernel of this map seems to be related to  several open problems such as Alon-Tarsi \cite {AT} and the Rota basis conjecture \cite{W}.  
(We want to thank Professor A.~Abdesselam for this valuable information, see \cite{Abd}.)

In the present paper we are interested in examples of graphs whose symmetrized graph monomial
are non-negative resp. sum of squares. Our interest in this matter has two sources. 

The first one is a recent conjecture by F. Sottile and E. Mukhin formulated on the AIM meeting 'Algebraic systems with only real solutions'  in 
October 2010.

\begin{conjecture}\label{conj:orig} The discriminant $\cD_{n}$ of the derivative of a polynomial $p$
of degree $n$ is the sum of squares of polynomials in the differences of the roots of $p.$
  \end{conjecture}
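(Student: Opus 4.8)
We outline a possible strategy for Conjecture~\ref{conj:orig}. The first step is to make $\cD_n$ completely explicit and to recognise it as a combination of symmetrized graph monomials. Write $p(x)=\prod_{i=1}^n(x-x_i)$; a short computation with Newton's formulas shows that the elementary symmetric functions of the $n-1$ critical points $y_1,\dots,y_{n-1}$ of $p$ are $e_k(y)=\frac{n-k}{n}\,e_k(x_1,\dots,x_n)$, so that $\cD_n=\operatorname{disc}(p')$ equals, up to a positive constant, the universal degree-$(n-1)$ discriminant polynomial evaluated at these rescaled elementary symmetric functions. In particular $\cD_n$ is a symmetric, translation-invariant, homogeneous polynomial of degree $(n-1)(n-2)$ in the $x_i$, hence — the elementary half of the circle of ideas around Theorem~\ref{th:inv} — a $\Q$-linear combination $\cD_n=\sum_g\la_g\,\tilde g$ of symmetrized graph monomials of multigraphs $g$ on $n$ vertices with $(n-1)(n-2)$ edges (a doubled edge for $n=3$, a combination of $6$-edge multigraphs on four vertices for $n=4$, and so on). The coefficients $\la_g$ can be pinned down by linear algebra, for small $n$ by the same computer calculations used elsewhere in this paper, after which the conjecture says precisely that this explicit combination of symmetrized graph monomials is a sum of squares in the differences $x_i-x_j$.

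The main structural handle I would exploit is that, up to a positive constant, $\cD_n$ is the determinant of the Hermite matrix $H$ of $p'$: the Gram matrix, in the basis $1,x,\dots,x^{n-2}$, of the trace form $\langle f,h\rangle=\sum_{j=1}^{n-1}f(y_j)h(y_j)$ on $\R[x]/(p')$. Its entries are the power sums of the critical points, hence polynomials in the $e_i(x)$ and so in the $x_i-x_j$, and because $p$ real-rooted forces $p'$ real-rooted (Rolle), $H$ is positive semidefinite at every real point — which re-proves $\cD_n\ge 0$. To upgrade this to a sum of squares one would look for a \emph{polynomial} rectangular factorization $H=C^{\mathsf T}C$ over $\R[x_1,\dots,x_n]$: Cauchy--Binet then yields $\cD_n=\mathrm{const}\cdot\sum_S(\det C_S)^2$, a genuine sum of squares, and translation invariance lets one rewrite each minor $\det C_S$ as a polynomial in the $x_i-x_j$.

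If no uniform factorization presents itself, I would first settle $n=3,4,5,6$ by producing explicit decompositions — for $n=3$ one already has $\cD_3=2\sum_{i<j}(x_i-x_j)^2$ — and try to extract from them a family of squares indexed by combinatorial data such as spanning forests on the $n$ vertices, or tableaux of the relevant shape. An induction also looks plausible: letting one root tend to infinity gives $\cD_{n+1}=\cD_n\cdot x_{n+1}^{2n-2}+(\text{lower order in }x_{n+1})$, so any sum-of-squares representation of $\cD_{n+1}$ restricts, in its top $x_{n+1}$-degree, to one of $\cD_n$. One would then hope to run this in reverse: subtract off a suitable symmetrized square built from an inductive certificate for $\cD_n$ and apply the inductive hypothesis to a remainder of strictly smaller degree.

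The hard part is precisely the passage from non-negativity to a sum of squares. The example of P. and A. Lax shows that a non-negative symmetrized graph monomial need not be SOS, so no soft positivity argument can suffice and an explicit certificate is unavoidable; moreover there is no a priori guarantee that the Hermite form of $(p',p'')$ admits a polynomial Cholesky-type factor, nor that $\cD_n$ lies in the $S_n$-invariant SOS cone for all $n$. That is exactly why the statement is posed here as a conjecture: I would expect a complete proof to be within reach for small $n$ through an explicit certificate, with the general case hinging on either the polynomial factorizability of the Hermite form or a successful version of the induction just sketched.
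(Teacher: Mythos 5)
There is no proof of this statement in the paper to compare against: Conjecture~\ref{conj:orig} is the Sottile--Mukhin conjecture, which the authors state, support with low-degree examples, strengthen to Conjecture~\ref{conj:extended}, and then explicitly list under ``Final remarks'' as an open problem. Your proposal, to its credit, does not pretend otherwise. The preparatory material you assemble is correct: comparing coefficients of $p'$ and $n\prod_j(x-y_j)$ does give $e_k(y)=\frac{n-k}{n}e_k(x)$; $\cD_n$ is indeed a symmetric, translation-invariant form of degree $(n-1)(n-2)$ and hence a linear combination of symmetrized graph monomials (this is the ``classically known'' spanning statement about $\PST_{n,d}$ recorded in the paper); and positive semidefiniteness of the Hermite/trace form of $p'$ at real-rooted $p$ re-proves $\cD_n\ge 0$ via Rolle. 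But all of this is the easy half. The single step that would constitute a proof --- producing an actual SOS certificate --- is exactly where the proposal stops. The polynomial factorization $H=C^{\mathsf T}C$ over $\R[x_1,\dots,x_n]$ that would let Cauchy--Binet finish the argument is not known to exist: a polynomial matrix that is positive semidefinite at every real point need not factor polynomially, and the determinant of such a matrix need not be a sum of squares, so this route is a hope rather than an argument. The inductive step has the same character: knowing that a hypothetical SOS representation of $\cD_{n+1}$ would restrict in top $x_{n+1}$-degree to one of $\cD_n$ gives no mechanism for constructing the former from the latter, since after subtracting a symmetrized square the remainder need not be non-negative, let alone SOS. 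So the proposal is an honest research plan, not a proof, and the conjecture remains open after reading it.

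It is worth noting that your Hermite-form strategy is genuinely different from the route the paper itself advocates. The paper's Conjecture~\ref{conj:extended} proposes the stronger statement that $\cD_{n,k}$ lies in the convex cone spanned by symmetrized monomials of \emph{square graphs}, and the worked examples ($\cD_{k+2,k}$, $\cD_{k+3,k}$, $\cD_{5,1}$, $\cD_{6,2}$) are explicit points in that cone found by linear algebra over the basis of Lemma~\ref{lm:lowertriangular} and Corollary~\ref{cor:squarebasis}; a proof along those lines would yield combinatorially indexed squares. Your approach, if the factorization obstacle could be overcome, would instead produce determinantal squares (minors of a Cholesky-type factor) with no a priori graph-theoretic meaning, and could in principle certify SOS-ness even when $\cD_n$ falls outside the square-graph cone --- a phenomenon the paper shows does occur for some non-negative classes with six edges. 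Either way, the Lax example you cite correctly rules out any soft argument, and neither route is carried to completion here.
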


Based on our calculations and computer experiments we propose the following extension of the latter conjecture. 
We call an arbitrary graph with all edges of even multiplicity a {\em square graph}. Notice that the symmetrized graph monomial of a square graph is obviously is  a sum of squares. 

\begin{conjecture}\label{conj:extended} 
For any non-negative integer $0\leq k \leq  n-2$ the discriminant $\cD_{n,k}$ of the $k$th derivative of a polynomial $p$
of degree $n$   is a finite positive linear combination  of  the symmetrized graph monomials  where all underlying graphs are square graphs with $n$ vertices. On other words, $\cD_{n,k}$ lies in the convex cone spanned by the symmetrized graph monomials of the square graphs with $n$ vertices and $\binom{n-k}{2}$ edges.  
\end{conjecture}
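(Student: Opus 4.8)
We do not resolve Conjecture~\ref{conj:extended} here, but we describe the approach we would take and record the cases that are immediate. Write $p(t)=\prod_{i=1}^{n}(t-x_i)$, so that $p^{(k)}(t)=k!\sum_{|T|=n-k}\prod_{i\in T}(t-x_i)$ and shifting all $x_i$ by a common constant shifts every root of $p^{(k)}$ by that constant. Hence $\cD_{n,k}$ is a translation-invariant symmetric polynomial in $x_1,\dots,x_n$, homogeneous of degree $(n-k)(n-k-1)=2\binom{n-k}{2}$, and, iterating Rolle's theorem, when the $x_i$ are real the polynomial $p^{(k)}$ has only real roots, so $\cD_{n,k}(\xvec)\ge 0$ on $\R^n$. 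Moreover the symmetrized monomials $\tilde g$ of \emph{all} directed graphs with $n$ vertices and $2\binom{n-k}{2}$ edges already span the whole space of translation-invariant symmetric homogeneous polynomials of that degree: symmetrizing the ``star'' monomials $\prod_{i=1}^{n-1}(x_i-x_n)^{a_i}$ with $\sum_i a_i=2\binom{n-k}{2}$ produces such $\tilde g$, and these monomials span the translation-invariant homogeneous polynomials of that degree. Thus $\cD_{n,k}$ is automatically a signed combination of symmetrized graph monomials, and the real content of the conjecture is the refinement to a \emph{nonnegative} combination of monomials of \emph{square} graphs; in particular the square graphs that can occur have $2\binom{n-k}{2}$ edges, matching the degree of $\cD_{n,k}$.

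The two endpoint values of $k$ are immediate. For $k=0$ we have $\cD_{n,0}=\prod_{i<j}(x_i-x_j)^2=\tfrac1{n!}\,\tilde g$ for $g=K_n$ with every edge doubled, a single square graph. For $k=n-2$ the derivative $p^{(n-2)}$ is a quadratic with roots $\eta_1,\eta_2$, and $\cD_{n,n-2}$ is a positive constant times $(\eta_1-\eta_2)^2$, a translation-invariant symmetric polynomial of degree $2$; that space is one-dimensional, spanned by $\sum_{i<j}(x_i-x_j)^2$, which is itself a positive multiple of the symmetrized monomial of the graph consisting of one doubled edge, and the proportionality constant is positive since $\cD_{n,n-2}\ge 0$. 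So the conjecture holds at both ends, and the problem is the interior range $0<k<n-2$.

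There the plan is to make the square structure explicit. One route is via the resultant: $\cD_{n,k}$ equals $\mathrm{Res}(p^{(k)},p^{(k+1)})$ up to an explicit nonzero rational constant, so substituting the expansion of $p^{(k)}$ above into the Sylvester (or B\'ezout--Hermite) matrix gives a determinant whose entries are explicit signed sums of products of differences $x_i-x_j$; one then expands the determinant and regroups monomials by the multigraph recording which differences occur, aiming to show that the coefficient attached to each multigraph reorganizes into a form $\sum_h\la_h\tilde g_h$ with every $g_h$ square and every $\la_h\ge 0$. A variant we find more promising is to look for a Gram representation $\cD_{n,k}=\det\!\big(A(\xvec)A(\xvec)^{\mathrm T}\big)$ with $A$ a polynomial matrix compatible with the $S_n$-action, so that Cauchy--Binet yields $\cD_{n,k}=\sum_S(\det A_S)^2$ as a sum of squares, and then to identify each $\det A_S$, or its $S_n$-orbit sum, with the monomial of a square graph; the computed tables for square graphs with four and six edges, together with the verified instances of Conjecture~\ref{conj:orig}, would serve to calibrate the right choice. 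A third, inductive, avenue rests on the identity that $\cD_{n,k}$ equals, up to a positive constant, $\cD_{n-1,k-1}$ evaluated at the roots of $p'$ (because $(p')^{(k-1)}=p^{(k)}$): granting the conjecture for $(n-1,k-1)$, $\cD_{n,k}$ becomes a nonnegative combination of terms $\tilde g(\xi_1,\dots,\xi_{n-1})$ with $g$ square and $\xi_j$ the roots of $p'$, and it remains to show each such term is a nonnegative combination of square-graph monomials in $x_1,\dots,x_n$ --- a ``functoriality'' statement whose $g=2K_{n-1}$ case is precisely Conjecture~\ref{conj:orig} in degree $n$.

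The main obstacle is positivity in the right cone, not mere nonnegativity. The convex cone generated by symmetrized monomials of square graphs lies strictly inside the cone of sums of squares, which itself lies strictly inside the cone of nonnegative symmetric translation-invariant polynomials --- indeed the Lax--Lax graph shows that even a single nonnegative symmetrized monomial need not be a sum of squares --- so no general positivity principle can finish the argument, and the determinantal/resultant structure special to discriminants must be exploited in an essential way. Concretely, we expect the hard point to be controlling the sign cancellations in the expansion of $\mathrm{Res}(p^{(k)},p^{(k+1)})$ --- equivalently, producing the correct Gram-type matrix $A$ --- uniformly in $n$ and $k$, so that the sum-of-squares and the square-graph structure become visible at the same time; a second, lesser, difficulty is the uniform combinatorial bookkeeping of which square graphs occur and with what multiplicities.
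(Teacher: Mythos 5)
This statement is one of the paper's two open conjectures, not a theorem: the paper offers no proof, only supporting evidence (the examples for $\cD_{n,0}$, $\cD_{k+2,k}$, $\cD_{k+3,k}$, $\cD_{5,1}$, $\cD_{6,2}$, partly themselves conjectural or verified only for small $k$), and it explicitly lists ``Prove Conjectures~1 and~2'' among the final challenges. So there is no proof in the paper to compare yours against, and you are right not to claim one. The parts of your write-up that are actually verifiable are correct and consistent with the paper: the degree count $(n-k)(n-k-1)=2\binom{n-k}{2}$, the nonnegativity of $\cD_{n,k}$ on real tuples via Rolle, the spanning statement for symmetrized graph monomials, and the two endpoint cases $k=0$ and $k=n-2$, which reproduce the paper's first two examples (your one-dimensionality argument for the degree-$2$ component is a clean way to settle $k=n-2$, and it does yield a positive coefficient since $\cD_{n,n-2}\ge 0$).

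That said, as a proof of the statement the proposal is incomplete by its own admission: the entire interior range $0<k<n-2$ is untouched, and the three proposed routes (resultant expansion, Gram/Cauchy--Binet factorization, induction via the roots of $p'$) are each left at the level of a plan, with the decisive step --- showing the coefficients land in the convex cone of square graphs rather than merely in its linear span, or that the Cauchy--Binet squares organize into square-graph monomials --- unproven. Your diagnosis of where the difficulty sits is accurate (the square-graph cone is strictly smaller than the SOS cone, which is strictly smaller than the nonnegative cone, as the paper's Proposition~\ref{pr:main5}(v) and the Lax example show, so no soft positivity argument can close the gap), but naming the obstacle is not the same as overcoming it. In short: nothing you assert is wrong, the easy cases are handled correctly, and the conjecture remains open both in the paper and in your proposal.
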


 Observe  that $\deg \cD_{n,k}=(n-k)(n-k-1)$ and is, therefore, even.    We use the following agreement in our figures below. If a shown graph has fewer vertices than $n$ then we always assume that it is appended by the required number of isolated vertices. 
 The following examples support the above conjectures.

\begin{example} If $k=0$ then $\cD_{n,0}$ is proportional to $\tilde g$ where $g$ is the complete graph on $n$ vertices with all edges of multiplicity $2$. 
\end{example}

\begin{example}
For $k\ge 0$, the discriminant $\cD_{k+2,k}$ equals 
$$\frac{(k+1)!}{2}\sum_{1\le i<j\le k+2} (x_i-x_j)^2.$$
\end{example}

In other words, $\cD_{k+2,k}=\frac{(k+2)!}{2} \tilde g$ where the graph $g$ is  
given in Fig.~\ref{fig:discrgraph4} (appended with $k$ isolated vertices). 

\begin{figure}[ht!]
\centering
\includegraphics[scale=0.5]{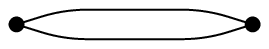}
\caption{The graph $g$ for the case $\cD_{k+2,k}$}\label{fig:discrgraph4}
\end{figure}

\begin{example}
For $k \ge 0$  we conjecture that the discriminant $\cD_{k+3,k}$ equals 

$$(k!)^3\left[
\frac{(k + 1)^3 (k + 2)(k + 6)}{72} \tilde {g_1} + 
\frac{(k + 1)^3 k(k+2)}{12} \tilde {g_2} +
\frac{(k - 1) k (k + 1)^2(k + 2)(k-2)}{96} \tilde {g_3}\right]$$
%
where the graphs $g_1,g_2$ and $g_3$ are given in Fig.~\ref{fig:discrgraphs6}. 
(This claim is verified for $k=1,\dots,12.$)\end{example}

\begin{figure}[ht!]
\centering
\includegraphics[scale=0.5]{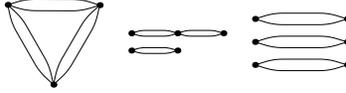}
\caption{The graphs $g_1,g_2$ and $g_3$ for the case $\cD_{k+3,k}$}\label{fig:discrgraphs6}
\end{figure}

\begin{example}
The discriminant $\cD_{5,1}$ is given by 
$$\cD_{5,1} = \frac{19}{6}\tilde{g}_1 + 14\tilde{g}_2 + 2\tilde{g}_3$$
where $g_1,g_2,g_3$ are given in Fig.~\ref{fig:d51graphs}.
\end{example}
\begin{figure}[ht!]
\centering
\includegraphics[scale=0.5]{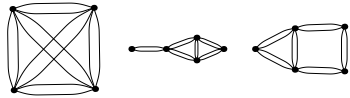}
\caption{The graphs $g_1,g_2$ and $g_3$ for the case $\cD_{5,1}$}\label{fig:d51graphs}
\end{figure}

\begin{example}
Lastly
$$\cD_{6,2} = 19200\tilde{g}_1+960\tilde{g}_2+3480\tilde{g}_3+3240\tilde{g}_4+\frac{3440}{3}\tilde{g}_5+2440\tilde{g}_6$$
where $g_1,\dots,g_6$ are given in Fig.~\ref{fig:d62graphs}. (Note that this representation as sum of graphs is by no means  unique.) 
\end{example}

The second motivation of the present study  is an interesting example of a graph whose symmetrized graph monomial is non-negative 
but not a sum of squares. Namely, the main result of \cite{lax} shows that $\tilde g$ for the graph
given in Fig.~\ref{fig:laxgraph} has this property. 

\begin{figure}[ht!]
\centering
\includegraphics[scale=0.5]{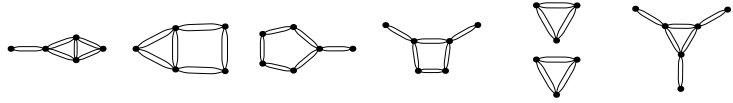}
\caption{The graphs $g_1,\dots,g_6$ for the case $\cD_{6,2}$}\label{fig:d62graphs}
\end{figure}

\begin{figure}[ht!]
\centering
\includegraphics[scale=0.5]{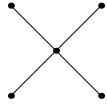}
\caption{The only 4-edged graph which yields a non-negative non-sos polynomial.}\label{fig:laxgraph}
\end{figure}

Our main computer-aided results regarding the case with 4 resp. 6-edged graphs are given below. Notice that there exist $23$  graphs with $4$ edges and $212$ graphs with $6$ edges. We say that two graphs are {\em equivalent} if their symmetrized graph monomials are non-vanishing identically and proportional.  

\begin{proposition}\label{pr:main4}
\rm{(i)}  $10$ graphs with $4$ edges have identically vanishing symmetrized graph monomial. 
\noindent \rm{(ii)} The remaining $13$ graphs are divided into 4 equivalence classes presented in Fig~\ref{fig:similar4graphs}.
\noindent
\rm{(iii)} The first two classes contain square graphs and, thus,  their symmetrized monomials are non-negative. 
\noindent
\rm{(iv)} The third graph  is  non-negative (as a positive linear combination of the Lax graph and a square graph). Since it effectively depends only on three variables, it is  SOS, see \cite{Hil}.
\noindent
\rm{(v)} The last graph is the Lax graph which is thus  the only non-negative  graph with $4$ edges  not being a  SOS. 

\begin{figure}[ht!]
\centering
\includegraphics[scale=0.4]{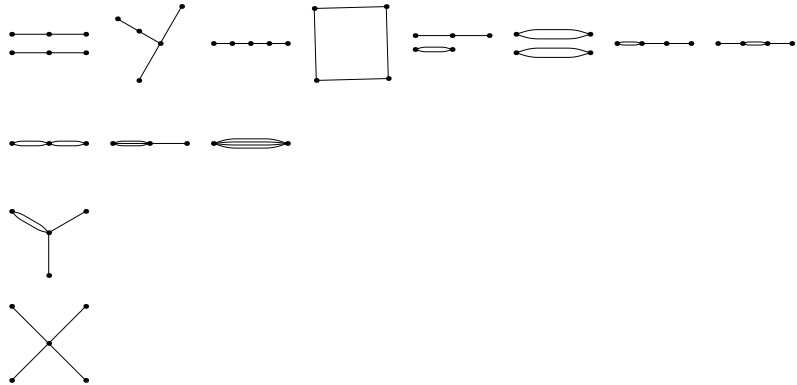}
\caption{$4$ equivalence classes of the 13  graphs with $4$ edges, whose symmetrized graph monomials do not vanish identically.}\label{fig:similar4graphs}
\end{figure}
\end{proposition}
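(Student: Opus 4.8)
The statement is the outcome of a finite classification, so the proof is largely an organized computation; I will indicate the steps and point out where actual mathematical input (rather than bookkeeping) is needed. First I would enumerate, up to isomorphism and ignoring isolated vertices, all loopless multigraphs with exactly four edges; since a graph with four edges has at most eight non-isolated vertices this is a short finite list, and one confirms that it has $23$ members. For each graph $g$ on this list I would expand $\tilde g(\xvec)=\sum_{\sigma\in S_n}P_g(\sigma\xvec)$, a symmetric polynomial of degree $4$, in the basis of monomial symmetric functions (equivalently, of power sums). Part~(i) is then immediate: exactly $10$ of these expansions are identically zero. For the remaining $13$ graphs I would test pairwise proportionality of the coefficient vectors; one finds exactly $4$ proportionality classes, which is part~(ii), and these are the classes depicted in Fig.~\ref{fig:similar4graphs}.

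For part~(iii) one exhibits in each of the first two classes a \emph{square} graph, i.e.\ one in which every multiplicity $a_{ij}$ is even. For such a graph $P_g=\bigl(\prod_{i<j}(x_i-x_j)^{a_{ij}/2}\bigr)^{2}$ is the square of a single monomial in the differences, so $\tilde g=\sum_{\sigma}P_g(\sigma\xvec)$ is literally a sum of squares and in particular non-negative. Since within an equivalence class all symmetrized monomials are scalar multiples of one another — and since for an undirected graph $\tilde g$ is in any case only defined up to an overall sign — the sum-of-squares property (after fixing the sign) carries over to every member of these two classes.

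For part~(iv), let $g_3$ be the representative of the third class, let $g_{\mathrm{Lax}}$ be the graph of Fig.~\ref{fig:laxgraph}, and choose a suitable four-edged square graph $g_{\mathrm{sq}}$. Solving the linear system that expresses $\tilde g_3$ in terms of $\tilde g_{\mathrm{Lax}}$ and $\tilde g_{\mathrm{sq}}$ in the symmetric-function basis, one finds $\tilde g_3=\alpha\,\tilde g_{\mathrm{Lax}}+\beta\,\tilde g_{\mathrm{sq}}$ with $\alpha,\beta>0$; by \cite{lax} the first summand is non-negative and the second is a sum of squares, so $\tilde g_3$ is non-negative. To upgrade this to a sum-of-squares representation, observe that $\tilde g_3$ is translation-invariant and that its underlying graph has only four vertices, so after eliminating one variable it becomes a quartic form in three variables; by Hilbert's theorem every non-negative ternary quartic is a sum of squares \cite{Hil}, hence so is $\tilde g_3$. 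Finally, for part~(v): by \cite{lax} the polynomial $\tilde g_{\mathrm{Lax}}$ is non-negative but is \emph{not} a sum of squares, so the fourth class genuinely furnishes such an example; combined with (i)--(iv) — which show that the symmetrized monomial of every other graph with four edges is either identically zero or, up to sign, a sum of squares — this identifies the Lax graph as the unique graph with four edges whose symmetrized monomial is non-negative but not a sum of squares.

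The enumeration and the polynomial expansions are routine but voluminous, which is precisely why the result is stated as computer-aided; the two points that require genuine thought are the explicit identity in~(iv) — one must select the correct companion square graph and check that both coefficients come out strictly positive — and the reduction in~(iv) to the three-variable setting, where Hilbert's classification of non-negative ternary quartics becomes available. Throughout, one must track signs with care: since $\tilde g$ is only well defined up to a global sign for an undirected graph, and the proportionality constants inside a class may be negative, ``non-negative'' is to be understood as ``non-negative after fixing the sign,'' and all the assertions above are meant in that sense.
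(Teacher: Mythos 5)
Your proposal is correct and follows essentially the same route as the paper, which presents this proposition as a computer-aided enumeration and justifies parts (iii)--(v) exactly as you do: square graphs give sums of squares, the third class is a positive combination of the Lax graph and a square graph and is SOS by Hilbert's theorem on ternary quartics after using translation invariance, and the Lax class is non-SOS by the result of Lax--Lax. The only point you elaborate beyond the paper is the explicit bookkeeping of signs and of the number of symmetrization variables, which the paper handles by its convention of appending isolated vertices.
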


\begin{proposition}\label{pr:main5}
\rm {(i)} $102$ graphs with $6$ edges  have identically vanishing symmetrized graph monomial.  
\rm {(ii)} The remaining $110$ graphs are divided into 27 equivalence classes. 
\rm{(iii)} $12$ of these classes can be expressed as non-negative linear combinations of square graphs, i.e. lie in the convex cone spanned by the square graphs. \rm{(iv)} Of the remaining 15 classes, symmetrized graph monomial of $7$ of them   change sign. 
\rm {(v)} Of the remaining 8 classes (which are presented on Fig.~7)  the first 5 are sums of squares, given as matrix representations in the Appendix below. (Notice however that these symmetrized graph monomials do not lie in the convex cone spanned by the square graphs.) 
\rm{(vi)} The last 3 classes contain all non-negative graphs with $6$ edges,  which are not SOS and, therefore, give new examples 
of graphs a'la Lax.

\end{proposition}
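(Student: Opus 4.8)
The statement is a finite classification, so the proof is an explicit computer-assisted enumeration; what has to be supplied is the algorithm together with the way each of the six assertions is certified. The plan is as follows. First I would generate all $212$ loopless multigraphs with $6$ edges up to isomorphism --- for instance by enumerating multisets of $6$ edges on at most twelve labelled vertices and reducing to canonical forms --- pick an arbitrary orientation of each graph $g$, and compute its symmetrized graph monomial $\tilde g(\xvec)=\sum_{\sigma\in S_n}P_g(\sigma\xvec)$ symbolically (with $n$ the number of vertices of $g$). A symbolic zero-test on these $212$ polynomials isolates the $102$ identically vanishing ones of part (i); checking, for each pair of the remaining $110$ polynomials, whether their coefficient vectors are proportional sorts them into the $27$ equivalence classes of part (ii). It then remains to classify each of the $27$ classes, which is where assertions (iii)--(vi) come from.

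For part (iii): for each class I would solve the linear feasibility problem $\tilde g=\sum_i\lambda_i\,\tilde h_i$ with all $\lambda_i\ge0$, where $h_i$ ranges over the (finitely many) square graphs with $6$ edges; a solution certifies membership in the convex cone spanned by the square-graph monomials --- the $12$ classes --- and for each of the other $15$ classes a dual-optimal solution produces an explicit linear functional on the space of degree-$6$ symmetric polynomials that is nonnegative on every square-graph monomial but strictly negative on $\tilde g$, i.e.\ a separating hyperplane certifying non-membership. For part (iv): for the $7$ sign-changing classes it suffices to exhibit a single rational point at which $\tilde g$ is negative (and one at which it is positive), found by a short search; this is the cheapest step. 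Note that the $12$ classes in the cone are automatically sums of squares --- a square-graph monomial is visibly SOS and a nonnegative combination of SOS forms is SOS --- and that (iv) already excludes non-negativity for $7$ further classes; the substance of (v)--(vi) is thus the remaining $8$ classes, all of which are symmetric forms of degree $6$.

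For (v)--(vi) I would test SOS-ness of each of the $8$ classes by the standard Gram-matrix semidefinite program: search for a positive semidefinite $Q$ with $\tilde g=m^{\mathsf{T}}Q\,m$, where $m$ is the vector of degree-$3$ monomials. To keep the computation exact, and to obtain the block form reported in the Appendix, I would first exploit the $S_n$-invariance of $\tilde g$ to pass to a symmetry-adapted basis, splitting the program into one small semidefinite program per isotypic component. When the program is feasible I would round its numerical optimum to a rational $Q\succeq0$ and verify the polynomial identity symbolically, obtaining the exact SOS certificates for the first $5$ classes; when it is infeasible I would extract a dual (Farkas-type) certificate showing no such $Q$ exists, which is the non-SOS half of (vi) for the last $3$ classes.

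The genuine obstacle is the other half of (vi): proving that these $3$ non-SOS classes are nevertheless non-negative, since infeasibility of the degree-$3$ SOS program says nothing about sign. I would approach this in the spirit of \cite{lax}: first try to write each class as a strictly positive combination of the Lax-graph monomial (known to be non-negative) and square graphs --- once more a linear feasibility test; and when that fails, certify $\tilde g\ge0$ by a degree-lifted argument, namely by showing that $\tilde g\cdot\bigl(\sum_{i<j}(x_i-x_j)^2\bigr)^{k}$ --- or $\tilde g$ times a small power of a positive symmetric quadratic form --- is a sum of squares for some small $k$, again via a symmetry-reduced SDP with exact rational rounding; by the Artin--Reznick denominator theorem this forces $\tilde g\ge0$. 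Choosing the right multiplier and keeping the lifted SDP small enough to certify over the rationals is where the real work lies; everything else is organised bookkeeping over the $212$ graphs.
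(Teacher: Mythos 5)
Your proposal matches the paper's own (computer-aided) argument: the authors enumerate the $212$ graphs, sort the nonvanishing monomials into proportionality classes, certify the SOS cases by explicit Gram matrices $v_iQ_iv_i^{\mathsf T}$ in the Appendix, verify non-SOS-ness of the last three classes via a Matlab/Yalmip semidefinite computation, and establish their non-negativity ``using methods similar to \cite{lax}'' --- exactly the pipeline you describe. Your additional details (symmetry-adapted SDP blocks, rational rounding, separating hyperplanes for cone non-membership, multiplier certificates) are sensible elaborations of the same approach rather than a different route.
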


\begin{figure}[ht!]
\centering
\includegraphics[width=0.75\textwidth]{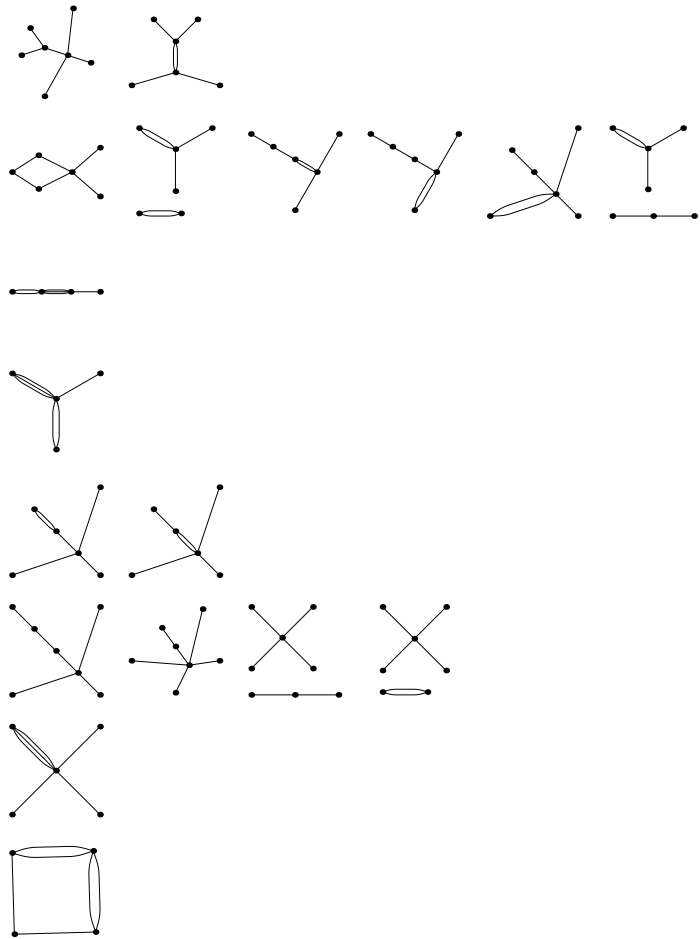}
\caption{$8$ equivalence classes of all non-negative graphs with $6$ edges.}\label{fig:similar6graphs}
\end{figure}

It is classically known that for any given number of vertices $n$ and edges $d$,
the linear span of the symmetrized graph monomials coming from all graphs with $n$ vertices and $d$ edges
coincides with the linear space $\PST_{n,d}$ of all symmetric translation-invariant polynomials of degree $d$ in $n$ variables.

We say that a pair $(n,d)$ is stable if $n\geq 2d,$ and for stable $(n,d),$ 
we suggests a natural basis in $\PST_{n,d}$ of symmetrized graph monomials which seems to be new,
proved in Lemma \ref{lm:partitioncoefficient}, Corollary \ref{cor:partitionsarebasis}.

In the case of even degree, there is a second basis in $\PST_{n,d}$ of symmetrized graph monomials
consisting of only square graphs, see Lemma \ref{lm:lowertriangular} and Corollary \ref{cor:squarebasis}.

\medskip 
Notice that  translation invariant 
symmetric polynomials appeared also in the early 1970's in the study of integrable $N$-body problems in mathematical 
physics (apparently)  see the famous paper of F.~Calogero \cite{Cal}. 
A few much more recent publications related to the ring of such polynomials in connection 
with the investigation of multi-particle interactions and the quantum Hall effect were 
printed since then, see e.g. \cite{SRC}, \cite{Lipt}. 
In particular, the ring structure and the dimensions of the homogeneous components of this ring were calculated. 
In particular, it was shown in \S~IV of \cite{SRC} and \cite{Lipt} that the ring of translation 
invariant symmetric polynomials (with integer coefficients) in $x_1,\dots,x_n$ is 
isomorphic as a graded ring to the polynomial ring $\Z[e_2,\dots,e_n]$ where $e_i$ 
stands for the $i$-th elementary symmetric function in $x_1,\dots,x_n$. 
From this fact one can easily show that the dimension of its $d$-th homogeneous component 
equals the number of distinct partitions of $d$ where each part is strictly bigger 
than $1$ and the number of parts is at most $n$. 
Several natural linear bases were also suggested for each such homogeneous component, see (29) in \cite{SRC} and \cite {Lipt}. 
It seems that the authors of the latter papers were unaware of the mathematical developments in this field related to graphs. 


\medskip 
\noindent
{\em Acknowledgements.} We are sincerely grateful  to Professors A.~Abdesselam,  F.~Sottile, B.~Sturmfels for discussions and important references and to Doctor P.~Rostalski  for his assistance with computer-aided proof of the fact that certain symmetrized graph monomials are SOS. 


\section{Some generalities about symmetrized graph monomials}

\begin{definition}
Let $g$ be a directed graph with $d$ edges and vertices $v_1,v_2,\dots,v_n.$
Let $\alpha=(\alpha_1,\dots,\alpha_d)$ be an integer partition of $d.$
A \emph{partition-coloring} of $g$ with $\alpha$ is an assignment of
colors to the edges and vertices of $g$ satisfying the following:
\begin{itemize}
\item For each color $i,$ $1\leq i \leq d,$ we paint the vertex $v_j$ and  $\alpha_i$ edges connected to $v_j$ with the color $i.$
\item Each edge of $g$ is painted with exacly one color.
\item Each vertex is painted at most once.
\end{itemize}
An edge is \emph{odd-colored} if it has color $j$ and is directed to a vertex with the same color.
The coloring is said to be negative if there is an odd number of odd-colored edges in $g,$ and positive otherwise.
\end{definition}

\begin{definition}
Given a polynomial $P(\xvec)$ and a multi-index $\alpha = (\alpha_1,\dots,\alpha_n),$
we use the notation $Coeff_\alpha(P(\xvec))$ to denote the coefficient in front of $\xvec^\alpha$ in $P(\xvec).$
\end{definition}
Note that we may view $\alpha$ as a partition of the sum of the indices. 

\begin{lemma}\label{lm:symcoloring}
Let $g$ be a directed graph with $d$ edges and vertices $v_1,v_2,\dots,v_n.$
Then $Coeff_\alpha(\tilde{g})$ is given by the number of 
positive partition-colorings of $g$ with $\alpha$ minus the number of negative partition-colorings. 
\end{lemma}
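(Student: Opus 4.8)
The plan is to expand $\tilde g(\xvec) = \sum_{\sigma \in S_n} P_g(\sigma\xvec)$ directly and track which monomials $\xvec^\alpha$ arise, matching the bookkeeping to the partition-coloring data. Fix an orientation of $g$; for a permutation $\sigma$ the term $P_g(\sigma\xvec) = \prod_{e} (x_{\sigma(h(e))} - x_{\sigma(t(e))})$, one factor per edge $e$ with head $h(e)$ and tail $t(e)$. Expanding each binomial factor $(x_{\sigma(h(e))} - x_{\sigma(t(e))})$ means choosing, for every edge, one of its two endpoints to contribute its variable (with a sign $-1$ if the tail is chosen). Thus a monomial in the expansion of a single $P_g(\sigma\xvec)$ is indexed by a function $f$ from edges to endpoints; its exponent vector records, for each vertex $v$, how many edges incident to $v$ selected $v$; and its sign is $(-1)^{\#\{e : f(e) = t(e)\}}$.

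Next I would reorganize the double sum $\sum_\sigma \sum_f$ by the exponent vector it produces. A choice of $\sigma$ together with an edge-selection $f$ yields the monomial $\xvec^\beta$ where $\beta_{\sigma(v)} = \#\{e : f(e)=v\}$ for each original vertex $v$. So $\mathit{Coeff}_\alpha(\tilde g)$ counts (signed) pairs $(\sigma, f)$ with $\beta_{\sigma(v)} = \alpha_{\sigma(v)}$, i.e.\ such that the number of edges selecting vertex $v$ equals $\alpha_{\sigma(v)}$. Re-indexing, a pair $(\sigma, f)$ contributing to $\xvec^\alpha$ is the same datum as: an assignment of a ``target degree'' $\alpha_{\sigma(v)}$ to each vertex $v$, together with a selection $f$ realizing those degrees. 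Writing the color of vertex $v$ as the index $i$ with $\alpha_i = \alpha_{\sigma(v)}$ (when $\alpha$ has distinct parts this is forced; in general one must be careful, see below), the condition ``$f$ selects exactly $\alpha_i$ edges at the vertex of color $i$'' is precisely the requirement in the definition of a partition-coloring that $\alpha_i$ edges at $v_j$ get color $i$. The remaining edges at $v$ — the ones not selected by $f$ at $v$ — must then be selected at their other endpoint, and one checks this is consistent exactly when $f$ globally corresponds to a legal coloring; the sign $(-1)^{\#\{e: f(e)=t(e)\}}$ becomes $(-1)^{\#\{\text{odd-colored edges}\}}$ once one observes that an edge $e$ is ``selected at its tail'' iff, after coloring, $e$ carries the color of its tail and points at a vertex of that same color — matching the definition of odd-colored.

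The main obstacle, and the step requiring the most care, is the over/under-counting caused by repeated parts in $\alpha$ and by the symmetry group. When $\alpha$ has several equal parts, the correspondence between a pair $(\sigma,f)$ and a partition-coloring is many-to-one in a controlled way: permuting equal colors among the corresponding vertices, and permuting vertices with $\alpha_i = 0$, produces the same coloring but distinct $\sigma$'s. I would handle this by defining the bijection at the level of \emph{colored} vertices: a partition-coloring already records which vertex gets which color-index, so I should count pairs $(\sigma, f)$ modulo the stabilizer that permutes equal values of $\alpha$, and show the factor by which $|S_n|$ over-counts is exactly absorbed — equivalently, fix a canonical $\sigma$ placing the exponents in weakly decreasing order and count orbits. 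A clean way to sidestep this entirely is to prove the identity first for $\alpha$ with all parts distinct and at most one zero, where the correspondence $(\sigma,f)\leftrightarrow(\text{coloring})$ is a genuine bijection preserving sign, and then note both sides of the claimed identity are determined by this case (both $\mathit{Coeff}_\alpha(\tilde g)$ and the signed coloring count are symmetric in the parts of $\alpha$ and stable under appending parts equal to $0$). Assembling these pieces gives $\mathit{Coeff}_\alpha(\tilde g) = \#\{\text{positive colorings}\} - \#\{\text{negative colorings}\}$, as claimed.
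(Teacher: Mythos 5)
First, a point of orientation: the paper does not actually prove this lemma --- its ``proof'' is a citation to Lemma~2.3 of Sabidussi \cite{sabidussi2} --- so your expansion argument is not being measured against an in-paper argument; it is the standard proof and presumably the one being cited. Your first two steps are sound: expanding each edge factor amounts to choosing one endpoint per edge, the resulting exponent vector records how many edges select each vertex, and the sign bookkeeping is correct (an edge contributes $-1$ exactly when it selects the endpoint it points into, which under the paper's convention $P_g=\prod(x_i-x_j)^{a_{ij}}$ is precisely the definition of an odd-colored edge).

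The gap is in your final step, and the proposed ``clean sidestep'' does not close it. Knowing the identity for $\alpha$ with distinct parts, together with the facts that both sides are symmetric in the parts and stable under appending zeros, does \emph{not} determine either side at a multi-index with repeated parts: those values are independent data, so nothing about the repeated-part case follows. Worse, even in the distinct-part case the correspondence $(\sigma,f)\leftrightarrow(\text{coloring})$ is not a bijection under the paper's convention of $d$ colors on $n>d$ vertices. Concretely, let $g$ be the star with center $x_1$, leaves $x_2,x_3$, edges directed outward, plus an isolated vertex $x_4$: then $Coeff_{(2,0,0,0)}(\tilde g)=3!=6$ (one term for each $\sigma$ with $\sigma(1)=1$), while there is exactly one partition-coloring with the single color of size $2$, and it is positive. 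In general the fiber of the map $(\sigma,f)\mapsto(\text{coloring})$ contributes a factor $\prod_j m_j!$ with $m_j=\#\{i\in\{1,\dots,n\}:\alpha_i=j\}$, and the $m_0!$ coming from permuting the unpainted vertices does not cancel against anything on the coloring side. So you must actually carry out the orbit count you allude to, and when you do, you will find the statement holds only up to an explicit normalizing constant depending on $\alpha$ and $n$ --- harmless for the paper's applications, which use only non-vanishing and sign, but not what the lemma literally asserts. The cleanest repair is to adopt the convention that $\alpha$ is the full $n$-component exponent vector and every vertex receives exactly one of $n$ colors (colors with $\alpha_i=0$ painting a vertex and no edges); then $\sigma$ \emph{is} the vertex-coloring, $f$ \emph{is} the edge-coloring, the correspondence is a sign-preserving bijection, and no orbit counting is needed at all.
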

\begin{proof}
See \cite[Lemma 2.3]{sabidussi2}. 
\end{proof}

\subsection{Bases for $\PST_{n,d}$}

It is known that the dimension of $\PST_{n,d}$ with $n \geq 2d$ is given by 
the number of integer partitions of $d$ where each part is at least of size 2.
Such integer partition will be called a 2-partition.

To each 2-partition $\alpha = (\alpha_1,\alpha_2,\dots,\alpha_d)$, $\alpha_i \neq 1,$ we associate the following graph $b_\alpha$: 
For each $\alpha_i\geq 2,$ we have a connected component of $b_\alpha$
consisting of a center vertex, connected to $\alpha_i$ other vertices,
with the edges directed away from the center vertex. 
Since $\alpha$ is an integer partition of $d,$ it follows that $b_\alpha$ has exacly $d$ edges.
This type of graph will be called a \emph{partition graph}.

\begin{lemma}
Let $P(\xvec)$ be a polynomial.
Then
$$Sym_{(\xvec,\yvec)} P = \sum_{i=0}^{|\yvec|} \sum_{\stackrel{\sigma \subseteq \yvec}{|\sigma|=i} } \sum_{\stackrel{\tau \subseteq \xvec}{|\tau|=|\xvec|-i} } Sym_{(\tau \cup \sigma) } P.$$
\end{lemma}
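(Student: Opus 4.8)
The plan is to treat this identity as a straightforward reindexing of a finite sum, so that essentially all of the work lies in making the notation explicit. Recall that for a polynomial $P$ in the variables $\xvec$ and a finite set $S$ of variables, $Sym_S P$ is obtained by substituting, in all possible ways, distinct elements of $S$ for the variables of $P$ and summing the resulting polynomials; in particular, when $|S|=|\xvec|$ this is the sum of $P$ over all $|\xvec|!$ bijective relabellings of its variables by $S$, and $Sym_{(\xvec,\yvec)}P$ is the same construction performed over the joint set $\xvec\cup\yvec$, i.e. the sum of $P$ over all injective substitutions of its variables by elements of $\xvec\cup\yvec$.

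First I would record the elementary bijection that drives the argument: an injective substitution $\phi$ of the $|\xvec|$ variables of $P$ by elements of $\xvec\cup\yvec$ is the same datum as the pair consisting of its image $S=\mathrm{im}(\phi)$, an $|\xvec|$-element subset of $\xvec\cup\yvec$, together with the bijection from the variables of $P$ onto $S$ that it induces; conversely every such pair recovers a unique $\phi$. Sorting the injections occurring in $Sym_{(\xvec,\yvec)}P$ according to their image therefore yields
$$Sym_{(\xvec,\yvec)}P=\sum_{\substack{S\subseteq\xvec\cup\yvec\\|S|=|\xvec|}}Sym_S P,$$
with nothing over- or under-counted.

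Next I would organize the $|\xvec|$-element subsets $S$ of $\xvec\cup\yvec$ by the size $i:=|S\cap\yvec|$ of their overlap with $\yvec$. Setting $\sigma:=S\cap\yvec$ and $\tau:=S\cap\xvec$, the subset $S$ is the disjoint union $\tau\cup\sigma$ with $|\sigma|=i$ and $|\tau|=|\xvec|-i$, and every pair $(\tau,\sigma)$ consisting of a size-$(|\xvec|-i)$ subset of $\xvec$ and a size-$i$ subset of $\yvec$ arises exactly once. Letting $i$ range from $0$ to $|\yvec|$ (the terms with $i>|\xvec|$ being vacuous, since then $|\tau|=|\xvec|-i<0$) reindexes the right-hand side above precisely as the triple sum in the statement, which finishes the proof.

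I do not anticipate a genuine obstacle: the content is pure bookkeeping. The one point that needs care is the multiplicity — on the right-hand side $Sym_S P$ must be read as the full sum over all $|\xvec|!$ relabellings of $P$ by $S$, so that the passage from an injection to the pair (image, induced bijection) is weight-preserving rather than collapsing repeated terms. If one prefers not to exhibit the bijection explicitly, the same statement follows by induction on $|\yvec|$, the case $|\yvec|=0$ being the definition and the inductive step separating the injections into those that do and those that do not use the last element of $\yvec$; but the direct reindexing above is the cleanest route.
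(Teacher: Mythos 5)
Your argument is correct, and it supplies precisely the ``straightforward combinatorial argument'' that the paper declines to write out: the paper's entire proof of this lemma is the sentence ``This is standard, by a straightforward combinatorical argument,'' so there is no competing method to compare against --- your two-step reindexing (first sorting injections by their image $S$, then stratifying the $|\xvec|$-element subsets $S\subseteq\xvec\cup\yvec$ by $i=|S\cap\yvec|$) is the natural way to make that sentence precise, and the vacuity of the terms with $i>|\xvec|$ is handled correctly. The one point worth flagging is the normalization convention you yourself raise: the paper's Definition~1 symmetrizes by summing over \emph{all} permutations of the full variable set, under which reading each injection of the variables of $P$ into $\xvec\cup\yvec$ is counted $|\yvec|!$ times on the left-hand side but only once on the right, so the identity as literally stated holds only up to the constant factor $|\yvec|!$ (or exactly, under your injection convention). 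Since the lemma is used in the paper only to transfer non-negativity, the SOS property, and linear relations --- all of which are insensitive to a positive constant --- this discrepancy is harmless, but your explicit choice of convention is the honest way to make the stated equality literally true, and your remark that the passage from an injection to the pair (image, induced bijection) must be weight-preserving is exactly the right place to locate that care.
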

\begin{proof}
This is standard, by a straightforward combinatorical argument.
\end{proof}
\begin{corollary}
If $Sym_\xvec P$ is non-negative, then $Sym_{(\xvec,\yvec)} P$ is non-negative.
\end{corollary}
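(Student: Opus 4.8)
The plan is to deduce this directly from the identity established in the preceding lemma. First I would apply that lemma to expand
$$Sym_{(\xvec,\yvec)}P = \sum_{i=0}^{|\yvec|} \sum_{\stackrel{\sigma \subseteq \yvec}{|\sigma|=i}} \sum_{\stackrel{\tau \subseteq \xvec}{|\tau|=|\xvec|-i}} Sym_{(\tau \cup \sigma)}P,$$
which exhibits $Sym_{(\xvec,\yvec)}P$ as a finite sum, with positive (indeed unit) coefficients, of terms of the shape $Sym_{(\tau\cup\sigma)}P$. Since a finite non-negatively weighted sum of non-negative polynomials is again non-negative, it suffices to show that each individual summand $Sym_{(\tau\cup\sigma)}P$ is non-negative.

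The only point requiring attention is an elementary piece of bookkeeping. In each summand the sets $\tau$ and $\sigma$ are disjoint, since $\tau\subseteq\xvec$ and $\sigma\subseteq\yvec$, so $|\tau\cup\sigma| = |\tau| + |\sigma| = (|\xvec|-i)+i = |\xvec|$. Thus $\tau\cup\sigma$ is a set of exactly $|\xvec|$ variables, and fixing any bijection between $\xvec$ and $\tau\cup\sigma$ identifies $Sym_{(\tau\cup\sigma)}P$ with the polynomial $Sym_{\xvec}P$ after a relabeling of its variables (the remaining variables of $\xvec\cup\yvec$ simply do not occur in it). Now invoking the hypothesis that $Sym_{\xvec}P$ is non-negative, i.e.\ takes values $\ge 0$ under all real substitutions, and observing that relabeling variables --- or adjoining further variables that do not appear --- preserves this property, we conclude that every $Sym_{(\tau\cup\sigma)}P$ is a non-negative polynomial in $\xvec\cup\yvec$. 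Summing over all terms gives $Sym_{(\xvec,\yvec)}P\ge 0$.

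I do not anticipate a real obstacle: once the preceding lemma is available the statement is essentially immediate. The one thing worth being careful about is precisely the cardinality count above, which guarantees that each summand is a genuine full symmetrization over $|\xvec|$ variables --- hence an honest relabeling of $Sym_{\xvec}P$ --- rather than a partial symmetrization over a smaller or larger set of variables, where the non-negativity of $Sym_{\xvec}P$ would carry no information.
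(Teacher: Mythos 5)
Your proposal is correct and is exactly the argument the paper intends: the corollary is stated as an immediate consequence of the preceding lemma, whose decomposition exhibits $Sym_{(\xvec,\yvec)}P$ as a sum of terms, each a relabeling of $Sym_{\xvec}P$ over a set of $|\xvec|$ variables and hence non-negative. Your cardinality check $|\tau\cup\sigma|=|\xvec|$ is the right (and only) point to verify, so nothing is missing.
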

\begin{corollary}
If $Sym_\xvec P$ is a sum of squares, then $Sym_{(\xvec,\yvec)} P$ is a sum of squares.
\end{corollary}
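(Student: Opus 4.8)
The plan is to read the result off directly from the preceding lemma, using only the elementary observation that renaming variables sends a sum of squares to a sum of squares. The lemma expresses
$$Sym_{(\xvec,\yvec)} P = \sum_{i=0}^{|\yvec|} \sum_{\stackrel{\sigma \subseteq \yvec}{|\sigma|=i}} \sum_{\stackrel{\tau \subseteq \xvec}{|\tau|=|\xvec|-i}} Sym_{(\tau\cup\sigma)} P$$
as a sum of terms $Sym_{(\tau\cup\sigma)} P$, each appearing with coefficient $1$, and each indexed by a set $\tau\cup\sigma$ of exactly $|\xvec|$ variables drawn from $\xvec\cup\yvec$. So it suffices to show that every such term is itself a sum of squares.

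For this I would fix an $|\xvec|$-element set $T$ of variables and choose any ordering of its elements. Substituting this ordering into the slots of $P$ and summing over the symmetric group yields, on one hand, $Sym_T P$, and on the other hand $Sym_\xvec P$ with the variables $\xvec$ renamed to the chosen ordering of $T$; since $Sym_\xvec P$ is a symmetric polynomial in $\xvec$, this renamed polynomial does not depend on which ordering of $T$ was used, so the identification $Sym_T P = (Sym_\xvec P)\big|_{\xvec\mapsto T}$ is unambiguous. Writing the hypothesis as $Sym_\xvec P = \sum_j q_j(\xvec)^2$ and performing the substitution then gives $Sym_T P = \sum_j q_j(T)^2$, a sum of squares of polynomials in the variables of $T$, hence of polynomials in $\xvec\cup\yvec$.

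Applying this to every $T = \tau\cup\sigma$ occurring in the displayed decomposition, and recalling that a finite sum of sums of squares is again a sum of squares, we conclude that $Sym_{(\xvec,\yvec)} P$ is a sum of squares. (The companion non-negativity corollary follows by the identical argument, using only that all coefficients in the lemma are non-negative rather than that the individual summands are SOS.) The one point requiring care — and the only place the argument could conceivably slip — is the well-definedness of $(Sym_\xvec P)\big|_{\xvec\mapsto T}$: the individual squares $q_j$ need not be symmetric, so one must transport the whole sum $\sum_j q_j^2$ rather than the $q_j$ separately, and must check independence of the chosen ordering of $T$. Both are immediate from the symmetry of $Sym_\xvec P$, so I do not anticipate any substantive obstacle beyond this bookkeeping.
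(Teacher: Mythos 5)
Your argument is correct and is exactly the route the paper intends: the corollary is stated without proof precisely because each summand $Sym_{(\tau\cup\sigma)}P$ in the lemma's decomposition is a renaming of $Sym_\xvec P$ into $|\xvec|$ of the variables, hence a sum of squares, and the decomposition has all coefficients equal to $1$. Your extra care about transporting the whole symmetric sum $\sum_j q_j^2$ rather than the individual (possibly non-symmetric) $q_j$ is a sensible bookkeeping point but does not change the substance.
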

\begin{corollary}\label{cor:symrelation}
If $\sum_i \lambda_i Sym_\xvec P_i = 0$ then $\sum_i \lambda_i Sym_{(\xvec,\yvec)} P_i = 0.$
\end{corollary}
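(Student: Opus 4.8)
The plan is to deduce this directly from the preceding Lemma together with the elementary fact that a polynomial identity survives any substitution into its variables. Writing $j$ for the index in the linear relation (to avoid a clash with the summation index $i$ of the Lemma), I would first apply that Lemma to each $P_j$ separately, so that
\[
Sym_{(\xvec,\yvec)}P_j \;=\; \sum_{i=0}^{|\yvec|}\;\sum_{\stackrel{\sigma\subseteq\yvec}{|\sigma|=i}}\;\sum_{\stackrel{\tau\subseteq\xvec}{|\tau|=|\xvec|-i}} Sym_{(\tau\cup\sigma)}P_j .
\]
Multiplying by $\lambda_j$, summing over $j$, and interchanging the (all finite) summations, the assertion reduces to showing that for each \emph{fixed} admissible pair $(\tau,\sigma)$ one has $\sum_j \lambda_j\, Sym_{(\tau\cup\sigma)}P_j = 0$.

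For that step, the key observation is that every index set $\tau\cup\sigma$ occurring in the Lemma satisfies $|\tau\cup\sigma|=|\tau|+|\sigma|=|\xvec|$, and that $\tau$ and $\sigma$ are disjoint since $\xvec$ and $\yvec$ are disjoint. Hence $Sym_{(\tau\cup\sigma)}$, applied to a polynomial in the $|\xvec|$ variables $\xvec$, is nothing but full symmetrization over the $|\xvec|$ variables in $\tau\cup\sigma$; equivalently, any bijection $\xvec\to\tau\cup\sigma$ induces a substitution homomorphism $\varphi\colon\Q[\xvec]\to\Q[\xvec,\yvec]$ which sends $Sym_{\xvec}P_j$ to $Sym_{(\tau\cup\sigma)}P_j$ for every $j$ (the particular bijection is irrelevant, since $Sym_{\xvec}P_j$ is already symmetric in $\xvec$). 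Applying $\varphi$ to the hypothesis $\sum_j\lambda_j\,Sym_{\xvec}P_j=0$ then yields $\sum_j\lambda_j\,Sym_{(\tau\cup\sigma)}P_j=0$, and summing these vanishing contributions over all $(\tau,\sigma)$ gives $\sum_j\lambda_j\,Sym_{(\xvec,\yvec)}P_j=0$, as claimed. This also explains why the two preceding corollaries (on non-negativity and on being a sum of squares) follow by the same mechanism.

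There is no real obstacle here: the statement is a purely formal consequence of the Lemma. The one point worth pausing on is the cardinality bookkeeping $|\tau\cup\sigma|=|\xvec|$, since it is precisely what licenses identifying $Sym_{(\tau\cup\sigma)}P_j$ with a relabelled copy of $Sym_{\xvec}P_j$ and thereby transporting the relation from $\xvec$ to $(\xvec,\yvec)$. One cannot shortcut the argument by appealing to linearity of $Sym_{(\xvec,\yvec)}$ alone, because the hypothesis constrains the polynomials $Sym_{\xvec}P_j$ rather than the $P_j$ themselves; it is exactly the Lemma that connects the two symmetrizations applied to one and the same polynomial.
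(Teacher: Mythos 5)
Your proof is correct and is exactly the intended derivation: the paper states this as an immediate corollary of the preceding decomposition lemma, and your argument — expanding $Sym_{(\xvec,\yvec)}P_j$ via that lemma and observing that each $Sym_{(\tau\cup\sigma)}P_j$ is a relabelled copy of $Sym_{\xvec}P_j$ because $|\tau\cup\sigma|=|\xvec|$ — fills in precisely the step the paper leaves implicit. Nothing further is needed.
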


\subsection{Partition graphs}

We will use the notation 
that every symmetric polynomial $\tilde g$ associated with a graph on $d$ edges is symmetrized over $2d$ variables.
Corollary \ref{cor:symrelation} says that if a relation holds for symmetrizations in $2d$ variables, 
it will also hold for $2d+k$ variables, and therefore, 
each relation derived in this section also holds for $2d+k$ variables.

\begin{lemma}\label{lm:partitioncoefficient}
Let $b_\alpha$ be a partition graph with $d$ edges, $\alpha = (\alpha_1,\dots,\alpha_d),$ 
and let $\beta = (\beta_1,\beta_2,\dots,\beta_d)$ be a 2-partition.

Then $Coeff_\beta(\tilde{b_\alpha})$ is
$$
\begin{cases}
0 \mbox{ if } \beta \neq \alpha \\
\prod_{j=2}^d \#\{i |\alpha_i = j\}! \mbox{ if } \beta = \alpha
\end{cases}
$$
\end{lemma}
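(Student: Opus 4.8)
The plan is to combine Lemma~\ref{lm:symcoloring} with the extreme rigidity of a partition graph. By Lemma~\ref{lm:symcoloring}, $Coeff_\beta(\tilde{b_\alpha})$ equals the number of positive partition-colorings of $b_\alpha$ by $\beta$ minus the number of negative ones, so everything reduces to describing which partition-colorings of $b_\alpha$ by a 2-partition $\beta$ exist, how many there are, and what their signs are.

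The key structural observation is that every edge of $b_\alpha$ joins a \emph{center} vertex, whose degree is one of the parts $\alpha_i\ge 2$, to a \emph{leaf} of degree $1$. In a partition-coloring by $\beta$, the (unique) vertex carrying a color $j$ is incident to $\beta_j$ edges of that color; since $\beta$ is a 2-partition we have $\beta_j\ge 2$, so that vertex cannot be a leaf — it must be a center. Hence no edge can receive the color of its leaf endpoint, so every edge receives the color of the center it is attached to; and since a center is painted at most once (and must be painted, else its edges could not be colored), \emph{all} $\alpha_i$ edges at a center $c_i$ receive one and the same color $j$, and then "$\beta_j$ edges of color $j$" forces $\beta_j=\alpha_i$. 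Thus a partition-coloring by $\beta$ is the same data as an injective assignment of colors to the centers of $b_\alpha$ under which a degree-$\alpha_i$ center gets a color of $\beta$-value exactly $\alpha_i$ (the remaining weight-zero colors then being distributed over the non-center vertices). Such an assignment exists precisely when the multiset of center degrees — which is $\alpha$ itself — equals the multiset of parts of $\beta$, i.e.\ precisely when $\beta=\alpha$; this is the first case of the lemma. When $\beta=\alpha$, counting the admissible color-to-center bijections value by value contributes the factor $\prod_{j\ge 2}\#\{i:\alpha_i=j\}!$, which is the claimed constant.

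Finally I would handle signs. Each edge of $b_\alpha$ is directed away from its center and, as just shown, is painted with that center's color, while its head is a leaf, which never carries the center's color; hence no edge is odd-colored, every partition-coloring is positive, and there is no cancellation, so $Coeff_\alpha(\tilde{b_\alpha})$ is exactly the count above. The main obstacle is the bookkeeping hidden in the previous paragraph: one must keep careful track of how many colors and vertices are in play (the isolated-vertex padding, and the weight-zero colors), and verify that, under the conventions in force, the only surviving combinatorial factor is $\prod_{j=2}^d\#\{i:\alpha_i=j\}!$. The step that drives the whole argument — that a leaf cannot carry any color of a 2-partition — is immediate, and the diagonal vanishing $Coeff_\beta(\tilde{b_\alpha})=0$ for $\beta\neq\alpha$ follows with no further work.
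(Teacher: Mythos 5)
Your proposal is correct and follows essentially the same route as the paper's proof: both reduce to Lemma~\ref{lm:symcoloring}, observe that a 2-partition can only color center vertices (forcing $\beta=\alpha$ and making every coloring positive because all edges point outward to uncolored leaves), and count the colorings by permuting colors among centers of equal degree. Nothing to add.
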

\begin{proof}
We will try to color the graph $b_\alpha$ with $\beta:$

Since $\beta_i \neq 1,$ we may only color the center vertices of $b_\alpha.$
Hence, all edges in each component of $b_\alpha$ must have the same color as the center vertex.
It is clear that such coloring is impossible if $\alpha \neq \beta.$
If $\alpha = \beta,$ we see that each coloring has positive sign, 
since only center vertices are colored and all connected edges are directed outwards.

The only difference between two colorings must be the assignment of the colors to the center vertices.
Hence, components with the same size can permute colors, which yields
$$ \prod_{j=2}^d \#\{i |\alpha_i = j\}! $$
number of ways to color $g$ with the partition $\alpha_1,\dots,\alpha_p.$
\end{proof}
\begin{corollary}\label{cor:partitionsarebasis}
It follows that all partition graphs yield linearly independent polynomials,
since each partition graph $b_\alpha$ unique contributes with the monomial $\xvec^\alpha.$
The number of partition graphs on $d$ edges equals the dimension of $\PST_{d,n},$
and must therefore span the entire vector space.
\end{corollary}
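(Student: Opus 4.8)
The plan is to deduce both assertions of the corollary straight from Lemma~\ref{lm:partitioncoefficient}, which already does the real work. That lemma says that in the monomial expansion of $\tilde{b_\alpha}$, the monomials $\xvec^\beta$ with $\beta$ a $2$-partition of $d$ all have coefficient $0$ except for $\beta=\alpha$, whose coefficient is the positive integer $\prod_{j\ge 2}\#\{i\mid\alpha_i=j\}!$. Phrased as a matrix: listing the $2$-partitions of $d$ as $\alpha^{(1)},\dots,\alpha^{(m)}$ and recording, for each partition graph $b_{\alpha^{(s)}}$, the coefficients of $\xvec^{\alpha^{(1)}},\dots,\xvec^{\alpha^{(m)}}$, one gets a diagonal matrix with nonzero diagonal. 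This is the ``each $b_\alpha$ uniquely flags $\xvec^\alpha$'' phenomenon referred to in the statement.

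First I would prove linear independence. Suppose $\sum_{s=1}^m\lambda_s\,\tilde{b}_{\alpha^{(s)}}=0$ identically (symmetrized over $2d$ variables by our standing convention, hence over $n\ge 2d$ variables by the remark following Corollary~\ref{cor:symrelation}). Fix $t$ and apply $Coeff_{\alpha^{(t)}}$ to both sides: by Lemma~\ref{lm:partitioncoefficient} every term with $s\neq t$ contributes $0$, while the $s=t$ term contributes $\lambda_t\prod_{j\ge 2}\#\{i\mid\alpha^{(t)}_i=j\}!$. Since this product of factorials is a positive integer, $\lambda_t=0$; as $t$ was arbitrary, the $\tilde{b}_{\alpha^{(s)}}$ are linearly independent (in particular none of them vanishes identically).

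Next I would invoke the dimension count. Each $P_{b_\alpha}$ is a product of factors $x_i-x_j$, hence invariant under the simultaneous translation $x_i\mapsto x_i+c$, and homogeneous of degree equal to its number of edges, namely $d$; therefore $\tilde{b}_\alpha\in\PST_{n,d}$. The partition graphs on $d$ edges are in bijection with the $2$-partitions of $d$, so there are exactly $m$ of them, and — as recalled at the start of this subsection — $\dim\PST_{n,d}=m$ whenever $n\ge 2d$. A linearly independent family of $m$ vectors in an $m$-dimensional space is a basis; in particular it spans $\PST_{n,d}$, which is the assertion.

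I expect no genuinely hard step once Lemma~\ref{lm:partitioncoefficient} is granted. The only point needing a line of care is matching the combinatorics on the two sides of the count, i.e.\ confirming that ``partition graph on $d$ edges'' and ``$2$-partition of $d$ with at most $n$ parts'' enumerate the same set; this is immediate in the stable range $n\ge 2d$ because a $2$-partition of $d$ has at most $d/2\le n$ parts, so the ``at most $n$ parts'' constraint is vacuous, and then one simply quotes the known value of $\dim\PST_{n,d}$.
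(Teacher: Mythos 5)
Your argument is correct and is essentially the paper's own: the paper justifies the corollary exactly by noting that Lemma~\ref{lm:partitioncoefficient} makes each $\tilde{b}_\alpha$ the unique contributor of the monomial $\xvec^\alpha$ (giving linear independence), and then matching the count of partition graphs against $\dim\PST_{n,d}$. Your write-up merely spells out the diagonal-matrix formulation and the membership $\tilde{b}_\alpha\in\PST_{n,d}$, which the paper leaves implicit.
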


\subsection{Square graphs}

We will use the notation $\alpha = (\alpha_1,\dots,\alpha_q | \alpha_{p+1},\dots,\alpha_p)$ 
to denote a partition where $\alpha_1,\dots,\alpha_q$ are the odd parts in decreasing order,
and $\alpha_{q+1},\dots,\alpha_p$ are the even parts in decreasing order. 
Parts are allowed to be equal to 0, so that $\alpha$ can be used as multi-index over $p$ variables.

Now we define a second type of graphs that we associate with 2-paritions of even integers:

Let $\alpha = (\alpha_1,\alpha_2,\dots,\alpha_k | \alpha_{k+1},\dots,\alpha_d)$, $\alpha_i \neq 1,$ be a 2-partition of $d.$ 
Since this is a partition of an even integer, $k$ must be even.

For each even $\alpha_i\geq 2,$ we have a connected component of $h_\alpha$
consisting of a center vertex, connected to $\alpha_i/2$ other vertices,
with the edges directed away from the center vertex, and with multiplicity 2. 

For each pair $\alpha_{2j-1},\alpha_{2j}$ of odd parts, $j=1,2,\dots,\frac{k}{2}$ we 
have a connected component consisting of two center vertices $v_{2j-1}$ and $v_{2j},$ 
such that $c_i$ is connected to $\lfloor \alpha_i/2 \rfloor$ other vertices for $i=2j-1,2j$ with edges of multiplicity 2,
and the center vertices are connected with a double edge. This type of component will be called a \emph{glued component}.

Thus, each edge in $h_\alpha$ has multiplicity 2, and the number of edges, counting multiplicity, is $d.$
This type of graph will be called a \emph{square graph}. Note that $\tilde{h}_\alpha(\xvec)$ is a sum of squares. 

\begin{figure}[ht!]
\centering
\includegraphics[width=0.75\textwidth]{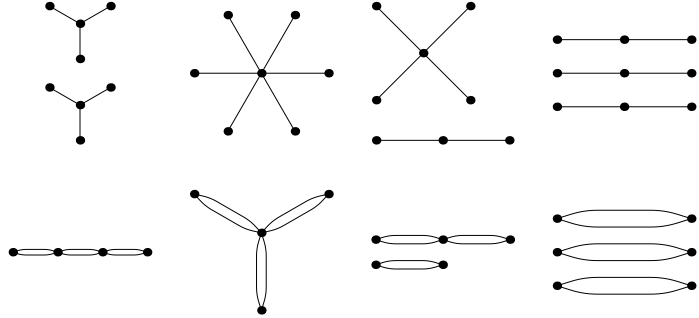}
\caption{A  base of partition graphs and a base of square graphs in the stable case  with 6 edges.}
\end{figure}

\begin{lemma}\label{lm:squarecoefficient}
Let $h_\alpha$ be a square graph where $\alpha=(\alpha_1,\dots,\alpha_p).$
Then
$$Coeff_\alpha(\tilde{h}) = (-1)^{\frac12 \#\{i |\alpha_i \equiv_2 1\}} 2^{\#\{i |\alpha_i = 2\}} \prod_{j=2}^n \#\{i |\alpha_i = j\}! $$
\end{lemma}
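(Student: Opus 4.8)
The statement asks for the coefficient of $\xvec^\alpha$ in $\tilde h_\alpha$, where $h_\alpha$ is the square graph attached to a $2$-partition $\alpha$. My plan is to compute this via Lemma~\ref{lm:symcoloring}, i.e.\ by counting the positive partition-colorings of $h_\alpha$ with $\alpha$ minus the negative ones, exactly as was done for partition graphs in Lemma~\ref{lm:partitioncoefficient}. Since all parts of $\alpha$ are $\geq 2$, a colored vertex must receive $\geq 2$ edges of its color; I would first argue that this forces only the \emph{center} vertices of $h_\alpha$ to be colored, and that each center vertex $c$ of a component of "size" $\alpha_i$ must take all $\alpha_i$ (or, in a glued component, $\alpha_{2j-1}$ resp.\ $\alpha_{2j}$) edges incident to it—because the outer vertices have valency $1$ or $2$ and cannot absorb a color of multiplicity $\geq 2$ that is not already used up by their center. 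This shows a coloring with $\beta=\alpha$ exists and pins down which edges get which color up to permuting equal parts, giving the factor $\prod_{j=2}^n \#\{i:\alpha_i=j\}!$ just as before.

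**The two new factors.** The difference from the partition-graph case is that edges now have multiplicity $2$, which produces the $2^{\#\{i:\alpha_i=2\}}$ factor, and that glued components contribute odd-colored edges, which produce the sign $(-1)^{\frac12\#\{i:\alpha_i\equiv_2 1\}}$. For the power of $2$: a part $\alpha_i=2$ corresponds to a component that is a single double edge from the center to one outer vertex; the two parallel edges are distinguishable, and the definition of partition-coloring demands we choose which two edges at $c$ get color $i$—for $\alpha_i=2$ there are exactly two such edges but we are choosing an ordered... here I need to be careful: the coloring just paints $\alpha_i$ edges, so the choice is unordered, and the factor $2$ must instead come from the \emph{orientation bookkeeping} or from the fact that there genuinely are two ways to realize the color class as a multiset of the two parallel edges when we track them as distinct objects. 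I would make this precise by declaring the $2d$ edges (with multiplicity) as a labeled set, so that each double edge contributes a factor $2!=2$ for the two orderings, but for $\alpha_i>2$ the outer vertices force a grouping that kills the extra symmetry—only the $\alpha_i=2$ components remain free, giving $2^{\#\{i:\alpha_i=2\}}$. (This is the step I expect to be the main obstacle: getting the bookkeeping of parallel edges vs.\ the unordered nature of a coloring exactly right, and confirming no extra factors of $2$ sneak in from components with $\alpha_i\geq 4$ or from glued components.)

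**The sign.** For the sign: in a component where the center $c$ has color $i$ and all its double edges point \emph{away} from $c$, an edge is odd-colored only if it points \emph{to} a vertex of color $i$; but the outer vertices are uncolored, so no odd-colored edges arise from the non-glued components (including the double-edge components with $\alpha_i=2$, since the double edge points outward). In a glued component for the odd pair $(\alpha_{2j-1},\alpha_{2j})$, the two centers $v_{2j-1},v_{2j}$ carry distinct colors, and the central double edge joining them—pointing, say, from $v_{2j-1}$ to $v_{2j}$—is odd-colored precisely when it carries color $2j$ (the color of its head $v_{2j}$). Since the central double edge must be split so that $v_{2j-1}$ reaches $\lfloor\alpha_{2j-1}/2\rfloor$ outer vertices plus part of the central edge summing to $\alpha_{2j-1}$ edges, and $\alpha_{2j-1}$ is odd, exactly one of the two central parallel edges is forced to receive color $2j-1$ and the other color $2j$; the latter is odd-colored. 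So each glued component contributes exactly one odd-colored edge, hence a factor $-1$, and there are $\frac12\#\{i:\alpha_i\equiv_2 1\}$ glued components, giving the sign $(-1)^{\frac12\#\{i:\alpha_i\equiv_2 1\}}$. Assembling the three contributions—$\prod_j \#\{i:\alpha_i=j\}!$ from permuting equal components, $2^{\#\{i:\alpha_i=2\}}$ from the free double edges, and the sign from the glued components—yields the claimed formula, and since every valid coloring has this same sign the "plus minus" in Lemma~\ref{lm:symcoloring} just multiplies a single count by $\pm1$, with no cancellation.
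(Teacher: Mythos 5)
Your overall strategy is the same as the paper's: apply Lemma~\ref{lm:symcoloring}, argue as in Lemma~\ref{lm:partitioncoefficient} that the colors must sit on the centers, extract the factor $\prod_j \#\{i\mid\alpha_i=j\}!$ from permuting equal components, and get the sign from the one odd-colored central edge in each glued component. Your sign analysis is correct and matches the paper. But the step you yourself flagged as the main obstacle --- the origin of the factor $2^{\#\{i\mid\alpha_i=2\}}$ --- is resolved incorrectly. It does not come from treating the two parallel edges of a double edge as distinguishable objects with two ``orderings.'' In any valid coloring of a two-vertex component both parallel edges receive the same color, so there is no choice to make at the level of edges; and if one really adopted a labeled-edge convention, the central double edge of every glued component would likewise contribute a factor $2$ (which edge gets color $2j-1$ and which gets $2j$), producing an extra $2^{\#\{\text{glued components}\}}$ that is absent from the formula. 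So that bookkeeping is inconsistent with the statement you are trying to prove.

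The actual source of the factor, as in the paper, is a choice of \emph{vertex}, not of edges: in a component corresponding to a part $\alpha_i=2$ the graph is a single double edge between two vertices, \emph{both} of degree $2$, so either endpoint may be the painted vertex for color $i$; for $\alpha_i\geq 4$ only the true center has degree $\alpha_i$, so no such freedom exists. This is a possibility your write-up never considers (you only discuss painting the center $c$). Note also that accepting this correct source obliges you to check one more sign: when the non-center endpoint is painted, both edges of the double edge point \emph{toward} the painted vertex and are therefore odd-colored, contributing $(-1)^2=+1$, so the parity is unchanged --- a verification missing from your argument. With the factor of $2$ re-derived this way and that parity check added, the rest of your proof goes through and coincides with the paper's.
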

\begin{proof}
Similarly to Lemma \ref{lm:partitioncoefficient}, 
it is clear that a coloring of $h$ with $p$ colors 
require that each center vertex is painted.

The center vertex of a component with only two vertices is not uniquely determined,
so we have $ 2^{\#\{i |\alpha_i = 2\}}$ choices of centers.

It is clear that each glued component contributes with exacly one odd edge for every coloring,
and the sign is therefore the same for each coloring.
The number of glued components are precisely $\frac12 \#\{i |\alpha_i \equiv_2 1\}.$ 

Lastly, we may permute the colors corresponding to center vertices with the same degree.
These observations together yields the formula
$$(-1)^{\frac12 \#\{i |\alpha_i \equiv_2 1\}} 2^{\#\{i |\alpha_i = 2\}} \prod_{j=2}^n \#\{i |\alpha_i = j\}! .$$
\end{proof}

Define a total order on 2-partitions as follows:
\begin{definition}
Let $\alpha = (\alpha_1,\dots,\alpha_p | \alpha_{p+1},\dots,\alpha_q)$ and 
$\alpha' = (\alpha'_1,\dots,\alpha'_{p'} | \alpha'_{p'+1},\dots,\alpha_{q'})$ be 2-partitions.
We say that $\alpha \prec \alpha'$ if $\alpha_i = \alpha'_i$ 
for $i=1,\dots,j-1,$ $j\geq 1$ and one of the following holds:
\begin{itemize}
\item $\alpha_j > \alpha'_j$ and $\alpha_j = \alpha'_j \mod 2$
\item $\alpha_j$ is odd and $\alpha'_j$ is even.
\end{itemize}
This generalizes to $\alpha \preceq \alpha' \Leftrightarrow \alpha \prec \alpha' \mbox{ or } \alpha = \alpha'.$ 
\end{definition}

\begin{lemma}\label{lm:lowertriangular}
Let $h_\alpha$ be a square graph. Then we may write
\begin{eqnarray}\label{eqn:lincombsq}
\tilde{g_\alpha} = \sum_{\beta} \lambda_\beta \tilde{b}_\beta, \quad b_\beta \mbox{ is a partition graph},
\end{eqnarray}
where $\lambda_\beta=0$ if $\beta \prec \alpha.$
\end{lemma}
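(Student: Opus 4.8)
The plan is to reduce the statement to a coefficient computation and then to a combinatorial analysis of partition‑colorings. Since, by Corollary~\ref{cor:partitionsarebasis}, the partition graphs form a basis of $\PST_{n,d}$, we may certainly write $\tilde{h}_\alpha = \sum_\beta \lambda_\beta \tilde{b}_\beta$ with the sum over $2$-partitions $\beta$ of $d$. To isolate $\lambda_\beta$, I would extract the coefficient of $\xvec^\beta$ from both sides: by Lemma~\ref{lm:partitioncoefficient}, $Coeff_\beta(\tilde{b}_{\beta'})$ vanishes for $\beta'\neq\beta$ and equals $\prod_{j\ge 2}\#\{i:\beta_i=j\}!>0$ for $\beta'=\beta$, whence $\lambda_\beta = Coeff_\beta(\tilde{h}_\alpha)\big/\prod_{j\ge2}\#\{i:\beta_i=j\}!$. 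So it suffices to prove that $Coeff_\beta(\tilde{h}_\alpha)=0$ for every $2$-partition $\beta\prec\alpha$, and by Lemma~\ref{lm:symcoloring} it is enough to show that $h_\alpha$ admits \emph{no} partition-coloring with such a $\beta$ at all.

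The second step records how colors are forced to propagate through $h_\alpha$, every edge of which lies in a double bundle. For a bundle joining a vertex $u$ to a leaf $\ell$: since $\deg\ell=2$ and no color of a $2$-partition has value $1$, the two copies receive the same color, namely $u$'s color, or else $\ell$'s color, in which case $\ell$ is painted with value exactly $2$. For the central bundle of a glued component, joining its two centers $v,v'$, the two copies may in addition be \emph{split}, one taking $v$'s color and one $v'$'s. From this one reads off, component by component, the values possible at painted vertices: the center of a star component for an even part $\alpha_i$ can only be painted with an \emph{even} value, at most $\alpha_i$; in a glued component for odd parts $\alpha_{2j-1}\ge\alpha_{2j}\ (\ge 3)$, an \emph{odd} value can occur at $v$ only when the central bundle is split, and it is then an odd number between $3$ and $\alpha_{2j-1}$, and similarly at $v'$ with bound $\alpha_{2j}$. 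Consequently, in any partition-coloring with a $2$-partition $\beta$, the multiset of odd parts of $\beta$ is obtained from that of $\alpha$ by, for each glued component, either deleting the pair $\{\alpha_{2j-1},\alpha_{2j}\}$ (and recording some $2$'s) or replacing it by a pair $\{o,o'\}$ of odd numbers with $3\le o\le\alpha_{2j-1}$, $3\le o'\le\alpha_{2j}$; the even parts of $\beta$ arise from the even parts of $\alpha$ by replacing each $\alpha_i$ by a strictly smaller even number (possibly $0$) together with some $2$'s. The ``diagonal'' coloring — every leaf bundle to its center, every central bundle split — realizes precisely $\beta=\alpha$, and Lemma~\ref{lm:squarecoefficient} shows this particular coefficient is nonzero.

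The final step compares such a $\beta$ with $\alpha$ in the order $\prec$, which lists odd parts first in decreasing order and then even parts in decreasing order. Every deviation from the diagonal coloring replaces some part by strictly smaller parts of the same parity, or converts a pair of odd parts into even parts; in either case the sorted sequence of $\beta$ first differs from that of $\alpha$ at a position $j$ where either $\beta_j<\alpha_j$ with $\beta_j\equiv\alpha_j\ (\mathrm{mod}\ 2)$, or $\alpha_j$ is odd and $\beta_j$ is even (or zero) — and in both situations the definition of $\prec$ yields $\alpha\preceq\beta$, never $\beta\prec\alpha$. Hence no $\beta\prec\alpha$ is realizable by a coloring, so $Coeff_\beta(\tilde{h}_\alpha)=0$ and $\lambda_\beta=0$, as claimed. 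I expect the main obstacle to be exactly this last step: a careful, systematic bookkeeping showing that rerouting any single bundle — whether a leaf bundle in a star component, a leaf bundle in a glued component, or a central bundle — moves the sorted partition strictly upward in $\prec$. This is where the specific shapes of the graphs $h_\alpha$ and the tailored definition of $\prec$ are essential, and it would be easy to mis-orient a single comparison without enumerating the local cases explicitly.
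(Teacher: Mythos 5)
Your proposal is correct and follows essentially the same route as the paper: reduce via Lemma~\ref{lm:partitioncoefficient} (and Lemma~\ref{lm:symcoloring}) to showing that $h_\alpha$ admits no partition-coloring with any $\beta\prec\alpha$, and then analyze how colors propagate through the double bundles of $h_\alpha$. The only place you are looser than the paper is the final comparison, which you yourself flag as delicate: rather than characterizing all realizable $\beta$ and sorting them, the paper assumes $\beta\prec\alpha$ and splits directly on the two clauses of the definition of $\prec$ --- if $\beta_j>\alpha_j$ with the same parity at the first discrepancy, there is no unpainted center of degree at least $\beta_j$ left; and if $\beta_j$ is odd where $\alpha_j$ is even, then $\beta$ would need more odd parts than $\alpha$, which is impossible since every component of $h_\alpha$ has an even number of edges and each glued component can carry at most two odd colors --- which is the crisper way to finish the bookkeeping.
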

\begin{proof}
Let $\alpha = (\alpha_1,\dots,\alpha_q | \alpha_{q+1}\dots,\alpha_d)$
and let $\beta = (\beta_1,\dots,\beta_r | \beta_{r+1},\dots,\beta_d),$ 
with $\beta \prec \alpha.$ 
Consider equation \eqref{eqn:lincombsq} and apply $Coeff_\beta$ on both sides. 
Lemma \ref{lm:partitioncoefficient} implies 
$$Coeff_\beta(\tilde{h}_\alpha) = \lambda_\beta \cdot C_\beta, \text{ where } C_\beta>0.$$
It suffices to show that there is no partition-coloring of $h_\alpha$ with $\beta$ if $\beta \prec \alpha,$ 
since this implies $\lambda_\beta=0.$

We now have three cases to consider:

\textbf{Case 1:} $\alpha_i=\beta_i$ for $i=1,\dots,j-1$ and 
$\beta_j > \alpha_j$ where $\alpha_j$ and $\beta_j$ are either both odd or both even.

We must paint a center vertex and $\beta_j$ connected edges, since $\beta_j>\alpha_j\geq 2.$

There is no vacant center vertex in $g_\alpha$ 
with degree at least $\beta_j,$ 
all such centers have already been painted with the colors $1,\dots,j-1.$
Hence a coloring is impossible in this case.

\textbf{Case 2:} $\alpha_i=\beta_i$ for $i=1,\dots,j-1$, $\beta_j$ is odd and $\alpha_j$ is even.
This condition impies that $r<q.$ 

Every component of $h_\alpha$ has an even number of edges,
and only vertices with degree at least three can be colored with an odd color.
Therefore, glued components must be colored with exacly zero or two odd colors,
and non-glued component must have an even number of edges of each present color.
This implies that a coloring is only possible if $r\leq q,$ but this is not true in the considered case.

Hence, there is no coloring of $h_k$ with the colors given by $\beta,$ and therefore,
the coefficient in front of $\xvec^\beta$ is 0 in $\tilde{g}_k,$ implying $\lambda_\beta =0.$
\end{proof}

\begin{corollary}\label{cor:squarebasis}
The polynomials obtained from the square graphs with $d$ edges is a basis for $\PST_{d,n},$ if $d$ is even.
\end{corollary}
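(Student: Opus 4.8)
The plan is to argue that the square graphs $\{h_\alpha\}$, indexed by the 2-partitions $\alpha$ of $d$, span $\PST_{d,n}$ (equivalently, are linearly independent) by exhibiting the change-of-basis matrix from the square-graph ``basis'' to the partition-graph basis $\{b_\beta\}$ of Corollary \ref{cor:partitionsarebasis} as triangular with nonzero diagonal. Since both families are indexed by the same finite set of 2-partitions of $d$, and that set has size $\dim \PST_{d,n}$ (using $d$ even so that 2-partitions of $d$ exist and the stability hypothesis $n \ge 2d$ applies), it suffices to prove the transition matrix is invertible.

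First I would fix the total order $\preceq$ on 2-partitions from the Definition preceding Lemma \ref{lm:lowertriangular}, and check it is indeed a total order on the (finite) set of 2-partitions of $d$ — this is the one routine verification I would not skip, since triangularity is meaningless without it. Next, for each square graph $h_\alpha$ I would invoke Lemma \ref{lm:lowertriangular} to write $\tilde h_\alpha = \sum_\beta \lambda_\beta \tilde b_\beta$ with $\lambda_\beta = 0$ whenever $\beta \prec \alpha$; this says the matrix $M = (\lambda_{\alpha,\beta})$ is lower-triangular with respect to $\preceq$ (entries vanish strictly below the diagonal in the appropriate sense). Then I would read off the diagonal entry: by Lemma \ref{lm:squarecoefficient}, $Coeff_\alpha(\tilde h_\alpha) = (-1)^{\frac12 \#\{i \mid \alpha_i \equiv_2 1\}} 2^{\#\{i \mid \alpha_i = 2\}} \prod_{j \ge 2} \#\{i \mid \alpha_i = j\}!$, which is manifestly nonzero, while by Lemma \ref{lm:partitioncoefficient} the only partition graph contributing to the $\xvec^\alpha$ coefficient is $b_\alpha$ itself, with a positive coefficient $C_\alpha$. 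Hence $\lambda_{\alpha,\alpha} = Coeff_\alpha(\tilde h_\alpha)/C_\alpha \neq 0$, so $M$ is triangular with nonzero diagonal, hence invertible. Therefore $\{\tilde h_\alpha\}$ is obtained from the basis $\{\tilde b_\beta\}$ by an invertible linear transformation, so it is itself a basis of $\PST_{d,n}$.

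I would also remark at the end that $n \ge 2d$ is used only to guarantee that the constructed square graphs actually fit inside $n$ vertices (each glued component and each doubled star uses enough distinct vertices) and that $\dim \PST_{d,n}$ equals the number of 2-partitions of $d$; for $n > 2d$ one appends isolated vertices, as agreed in the introduction, and Corollary \ref{cor:symrelation} guarantees that linear independence is preserved when passing to more variables.

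The main obstacle I anticipate is purely bookkeeping: confirming that Lemma \ref{lm:lowertriangular} as stated covers \emph{every} pair $\beta \prec \alpha$ — its proof lists ``three cases'' but writes out only two, so I would want to make sure the odd/even split in the order definition is exhausted (in particular that the case $\beta_j$ even, $\alpha_j$ odd cannot occur under $\beta \prec \alpha$, and that equal-parity-but-smaller is handled). Once the triangularity input is solid, the corollary is immediate from the two coefficient lemmas and a counting argument; no analytic or combinatorial difficulty remains beyond matching index sets and dimensions.
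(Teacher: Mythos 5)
Your proposal is correct and follows essentially the same route as the paper's own proof: express each $\tilde h_\alpha$ in the partition-graph basis, use Lemma~\ref{lm:lowertriangular} for lower-triangularity of the transition matrix with respect to $\preceq$, and Lemmas~\ref{lm:partitioncoefficient} and~\ref{lm:squarecoefficient} for the nonvanishing of the diagonal entries. Your added remarks (verifying $\preceq$ is a total order, the role of $n\geq 2d$, and the identification $\lambda_{\alpha,\alpha}=Coeff_\alpha(\tilde h_\alpha)/C_\alpha$) are sensible elaborations of details the paper leaves implicit.
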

\begin{proof}
Let $\alpha_1 \prec \dots \prec \alpha_k$ be the 2-partitions of $d.$
Since $\tilde{b}_{\alpha_1},\dots,\tilde{b}_{\alpha_k}$ is a basis, there is a uniquly determined matrix $M$ such that
$$(\tilde{h}_{\alpha_1},\dots,\tilde{h}_{\alpha_k})^T = M(\tilde{b}_{\alpha_1},\dots,\tilde{b}_{\alpha_k})^T.$$
Lemma \ref{lm:lowertriangular} implies that $M$ is lower-triangular. 
Lemma \ref{lm:partitioncoefficient} and Lemma \ref{lm:squarecoefficient} 
implies that the entry at $(\alpha_i,\alpha_i)$ in $M$ is given by
$$(-1)^{\frac12 \#\{j |\alpha_{ij} \equiv_2 1\}} 2^{\#\{j |\alpha_{ij} = 2\}}$$
which is non-zero. Hence $M$ has an inverse and the square graphs is a basis.
\end{proof}

%
%
%
%
%
%

\section{Final remarks} 
Some obvious challenges related to this project are as follows.  

\medskip
\noindent
{\bf 1.}� Prove Conjectures~1 and ~2.

\medskip
\noindent
{\bf 2.}� Describe the boundary of the convex cone spanned by all square graphs with a given number of (double) edges and vertices.

\medskip
\noindent
{\bf 3.} Find more examples of graphs a'la Lax.
\medskip 

\section{Appendix}

Here we give a SOS presentation of the symmetrized graph monomial for the first $5$ classes given in Fig.~\ref{fig:similar6graphs}.
Each row yield the same polynomial, up to a constant. 
The symmetrized graph monomial from row $i$ is a constant multiple of the polynomial $v_i Q_i v_i^T,$
where $v_i$ is the coefficient vector and $Q_i$ is the corresponding symmetric positive semi-definite matrix, given below.
This certifies that the first 5 classes of graphs are sum of squares.

By using Matlab together with Yalmip, one may verify that the last three classes cannot be expressed as sums of squares.
It is relatively straightforward to verify that the polynomials indeed are non-negative, 
using methods similarly to \cite{lax}.

\begin{figure}[ht!]
\centering
\includegraphics[scale=0.3]{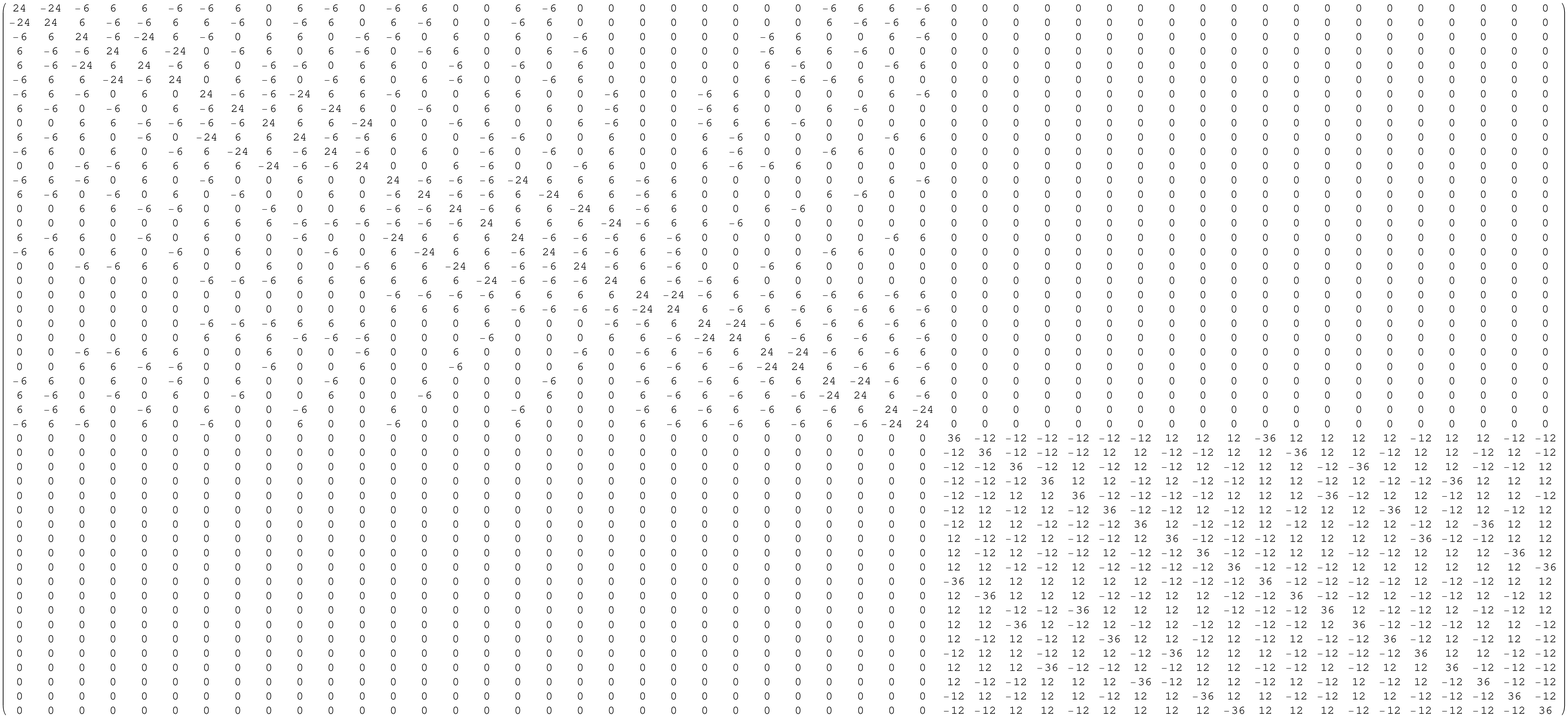}
\caption{$Q_1$}
\end{figure}

\begin{figure}[ht!]
\centering
\includegraphics[scale=0.3]{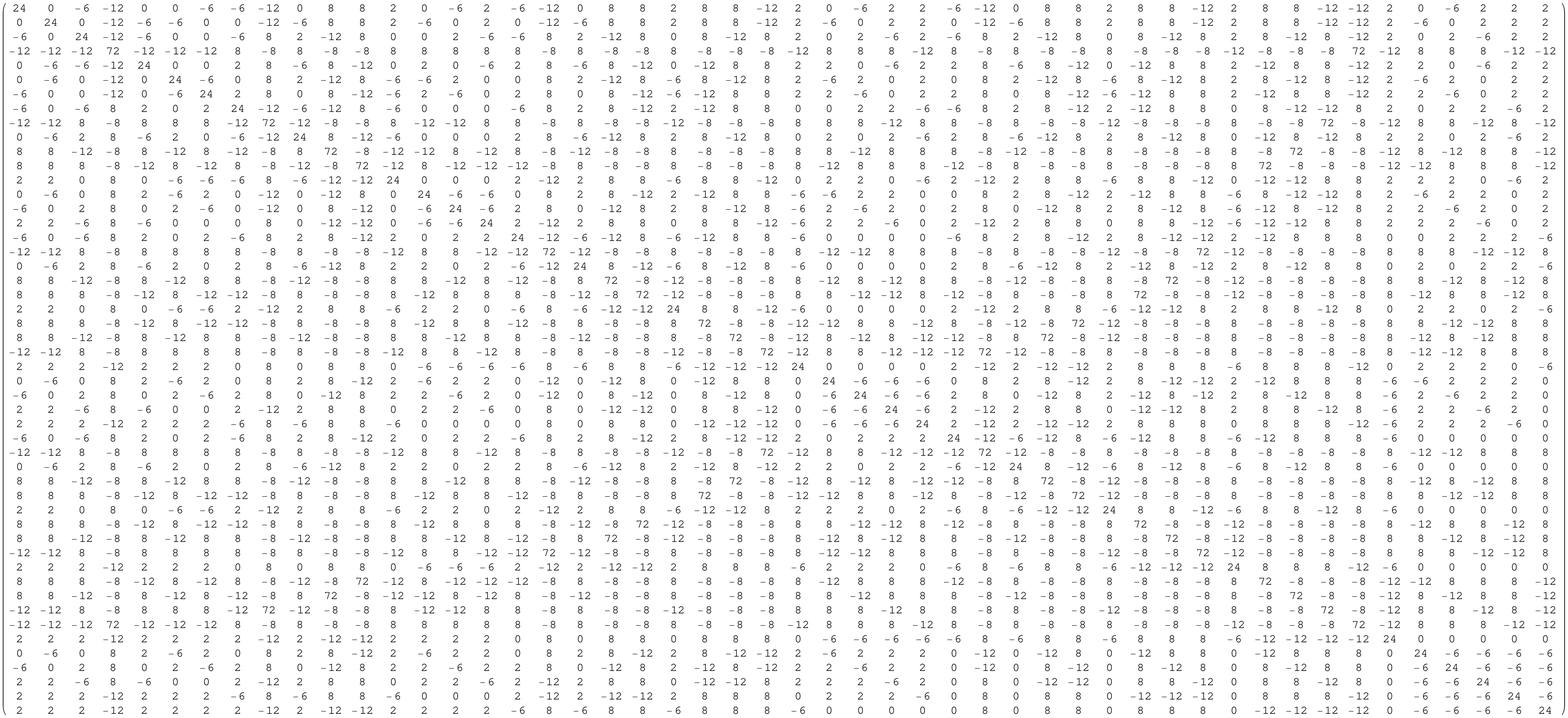}
\caption{$Q_{2}$}
\end{figure}

\begin{figure}[ht!]
\centering
\includegraphics[scale=0.3]{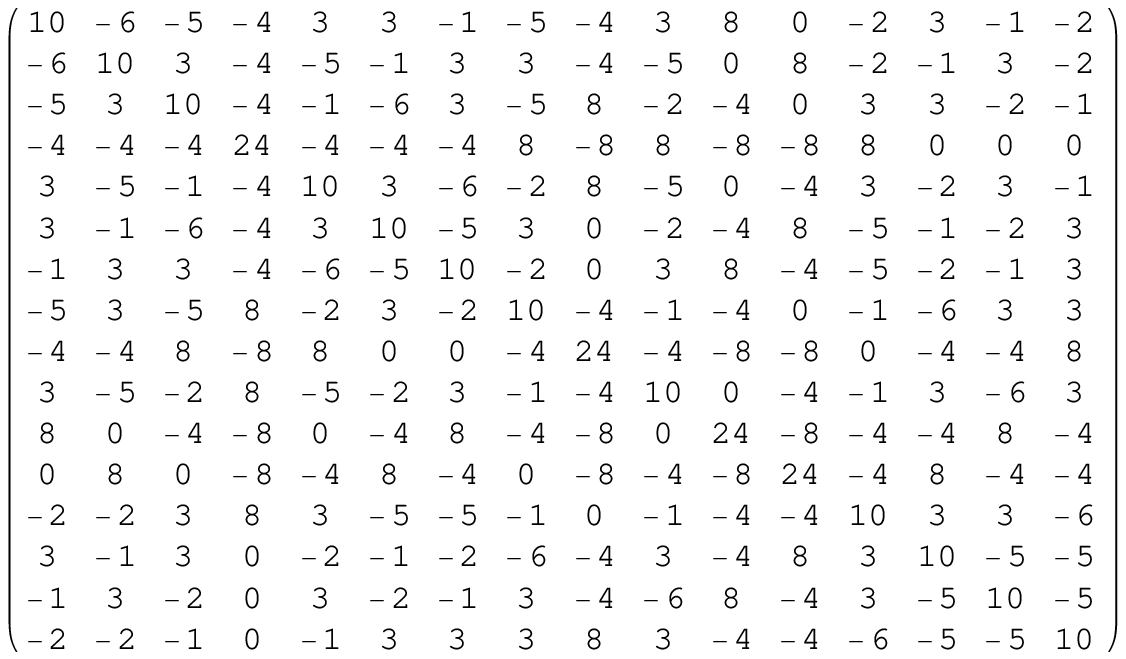}
\caption{$Q_{3}$}
\end{figure}

\begin{figure}[ht!]
\centering
\includegraphics[scale=0.3]{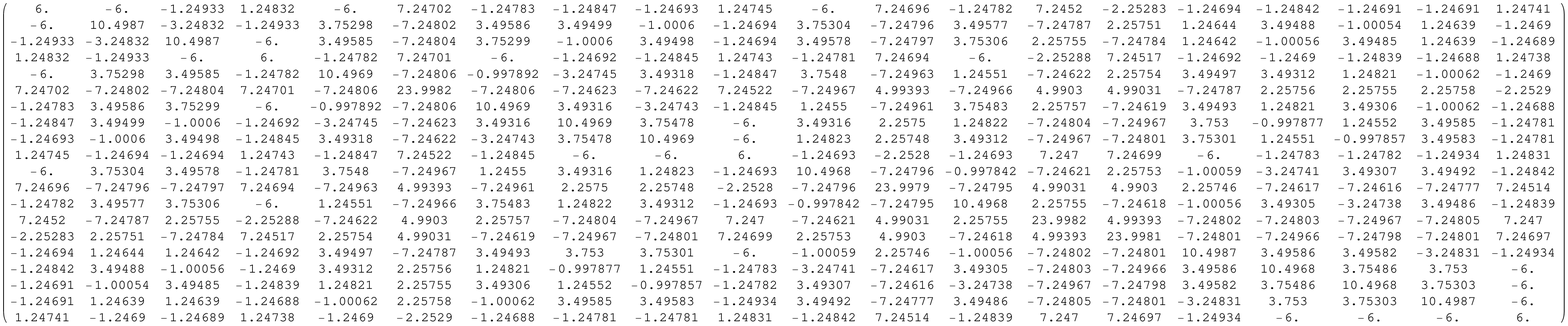}
\caption{$Q_{4}$}
\end{figure}

\begin{figure}[ht!]
\centering
\includegraphics[scale=0.3]{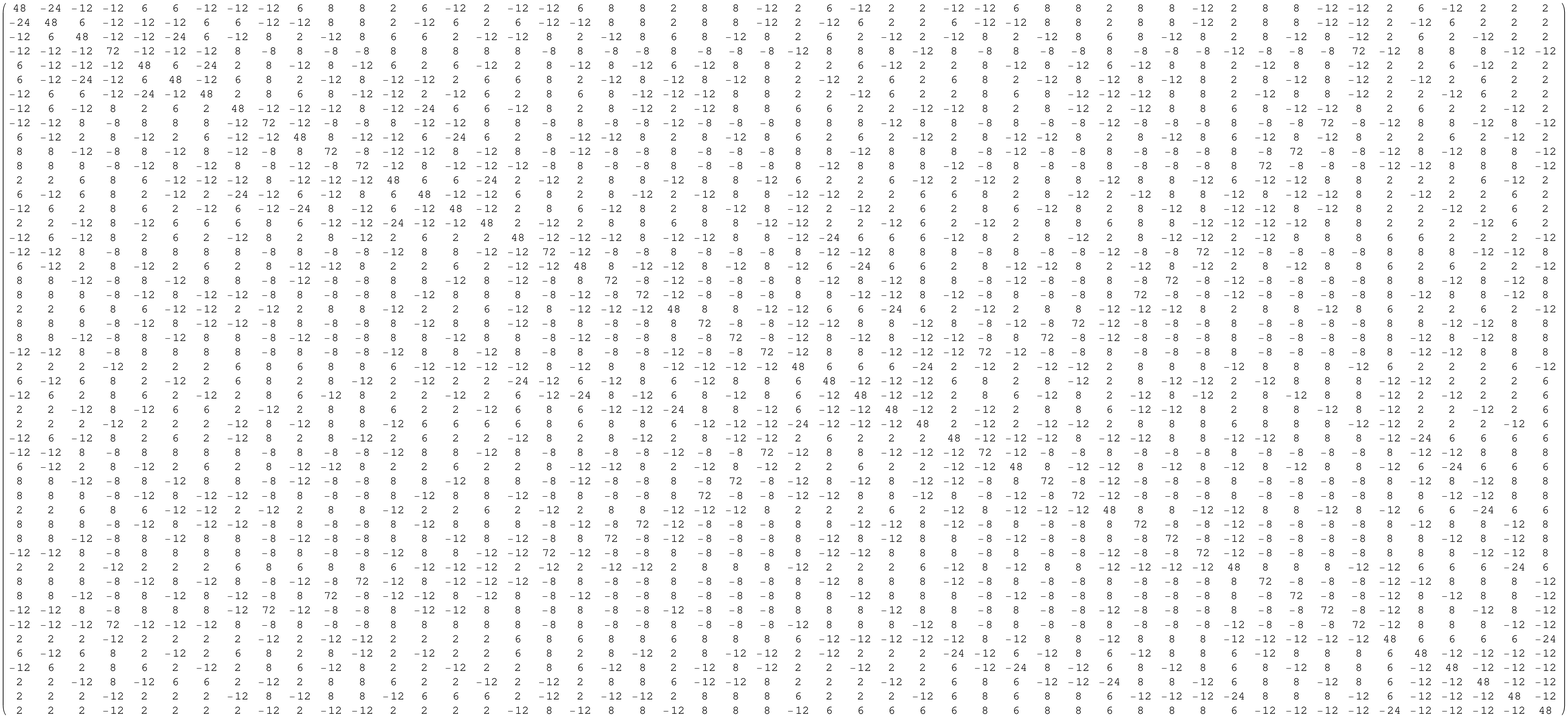}
\caption{$Q_{5}$}
\end{figure}

\begin{align*}
v_1=\{&x_1^2 x_2, x_1 x_2^2, x_1^2 x_3, x_2^2 x_3,x_1 x_3^2,x_2 x_3^2,x_1^2 x_4,x_2^2 x_4,x_3^2 x_4,x_1 x_4^2,x_2 x_4^2,x_3 x_4^2,\\
     &x_1^2 x_5,x_2^2x_5,x_3^2 x_5,x_4^2 x_5,x_1 x_5^2,x_2 x_5^2,x_3 x_5^2,x_4 x_5^2,x_5 x_6^2,x_5^2 x_6,x_4 x_6^2,x_4^2 x_6,\\
     &x_3 x_6^2,x_3^2 x_6,x_2 x_6^2,x_2^2x_6,x_1 x_6^2,x_1^2 x_6,x_1 x_2 x_3,x_1 x_2 x_4,x_1 x_3 x_4,x_2 x_3 x_4,x_1 x_2 x_5, \\
     &x_1 x_3 x_5,x_2 x_3x_5,x_1 x_4 x_5,x_2 x_4 x_5,x_3 x_4x_5,x_4 x_5 x_6,x_3 x_5 x_6,x_3 x_4 x_6,x_2 x_5 x_6,x_2 x_4 x_6, x_2 x_3 x_6, \\
     &x_1 x_5 x_6,x_1 x_4 x_6,x_1 x_3 x_6,x_1 x_2 x_6\}
\end{align*}

\begin{align*}
v_{2}=\{&x_5 x_6^2,x_5^2 x_6,x_4 x_6^2,x_4 x_5 x_6,x_4 x_5^2,x_4^2 x_6,x_4^2 x_5,x_3 x_6^2,x_3 x_5 x_6,x_3 x_5^2,x_3 x_4 x_6,\\
          &x_3 x_4 x_5,x_3x_4^2,x_3^2 x_6,x_3^2 x_5,x_3^2 x_4,x_2 x_6^2,x_2 x_5 x_6,x_2 x_5^2,x_2 x_4 x_6,x_2 x_4 x_5,x_2 x_4^2,\\
          &x_2 x_3 x_6,x_2 x_3 x_5,x_2 x_3 x_4,x_2x_3^2,x_2^2 x_6,x_2^2 x_5,x_2^2 x_4,x_2^2 x_3,x_1 x_6^2,x_1 x_5 x_6,x_1 x_5^2,\\
          &x_1 x_4 x_6,x_1 x_4 x_5,x_1 x_4^2,x_1 x_3 x_6,x_1 x_3 x_5,x_1x_3 x_4,x_1 x_3^2,x_1 x_2 x_6,x_1 x_2 x_5,x_1 x_2 x_4,\\
          &x_1 x_2 x_3,x_1 x_2^2,x_1^2 x_6,x_1^2 x_5,x_1^2 x_4,x_1^2 x_3,x_1^2 x_2\}
\end{align*}

\begin{align*}
v_{3}=\{&x_3 x_4^2,x_3^2 x_4,x_2 x_4^2,x_2 x_3 x_4,x_2 x_3^2,x_2^2 x_4,x_2^2 x_3,x_1 x_4^2,x_1 x_3 x_4,x_1 x_3^2,x_1 x_2 x_4,\\
          &x_1 x_2 x_3,x_1x_2^2,x_1^2 x_4,x_1^2 x_3,x_1^2 x_2\}
\end{align*}

\begin{align*}
v_{4}=\{&x_4^3,x_3 x_4^2,x_3^2 x_4,x_3^3,x_2 x_4^2,x_2 x_3 x_4,x_2 x_3^2,x_2^2 x_4,x_2^2 x_3,x_2^3,x_1 x_4^2,x_1 x_3 x_4,x_1 x_3^2,\\
          &x_1 x_2 x_4,x_1x_2 x_3,x_1 x_2^2,x_1^2 x_4,x_1^2 x_3,x_1^2 x_2,x_1^3\}
\end{align*}

\begin{align*}
v_{5}=\{&x_5 x_6^2,x_5^2 x_6,x_4 x_6^2,x_4 x_5 x_6,x_4 x_5^2,x_4^2 x_6,x_4^2 x_5,x_3 x_6^2,x_3 x_5 x_6,x_3 x_5^2,x_3 x_4 x_6,\\
           &x_3 x_4 x_5,x_3x_4^2,x_3^2 x_6,x_3^2 x_5,x_3^2 x_4,x_2 x_6^2,x_2 x_5 x_6,x_2 x_5^2,x_2 x_4 x_6,x_2 x_4 x_5,x_2 x_4^2,\\
           &x_2 x_3 x_6,x_2 x_3 x_5,x_2 x_3 x_4,x_2x_3^2,x_2^2 x_6,x_2^2 x_5,x_2^2 x_4,x_2^2 x_3,x_1 x_6^2,x_1 x_5 x_6,x_1 x_5^2,\\
           &x_1 x_4 x_6,x_1 x_4 x_5,x_1 x_4^2,x_1 x_3 x_6,x_1 x_3 x_5,x_1x_3 x_4,x_1 x_3^2,x_1 x_2 x_6,x_1 x_2 x_5,x_1 x_2 x_4,\\
           &x_1 x_2 x_3,x_1 x_2^2,x_1^2 x_6,x_1^2 x_5,x_1^2 x_4,x_1^2 x_3,x_1^2 x_2\}
\end{align*}



\end{document}